\documentclass[11pt, one side,emlines]{amsart}
\usepackage{amssymb,latexsym,xy,eucal,mathrsfs, graphicx, tikz, amsmath, caption}
\textwidth=15.2cm \textheight=24cm \theoremstyle{plain}
\newtheorem{lemma}{Lemma}[section]
\newtheorem{theorem}[lemma]{Theorem}

\newtheorem{corollary}[lemma]{Corollary}

\theoremstyle{definition}
\newtheorem{example}[lemma]{Example}
\newtheorem{remark}[lemma]{Remark}

\newtheorem{definition}[lemma]{Definition}

\numberwithin{equation}{section} \thispagestyle{empty} \voffset
-55truept \hoffset -15truept
\begin{document}
\baselineskip 15truept
\title{Conrad's Partial Order on p.q.-Baer $*$-Rings}
\subjclass[2010]{Primary 16W10; Secondary 06A06; 47L30} 
 \maketitle 
 \begin{center} 
Anil Khairnar\\
 {\small Department of Mathematics, Abasaheb Garware College, Pune-411004, India.\\
  \email{\emph{anil.khairnar@mesagc.org; anil\_maths2004@yahoo.com}}}\\  
  \vspace{.4cm}
  B. N. Waphare\\  
 {\small Center for Advanced Studies in Mathematics,
Department of Mathematics,\\ Savitribai Phule Pune University, Pune-411007, India.}\\
 \email{\emph{bnwaph@math.unipune.ac.in; waphare@yahoo.com}} 
 \end{center} 
\begin{abstract}    
  We prove that a p.q.-Baer $*$-ring forms a pseudo lattice with Conrad's partial order and also characterize
  p.q.-Baer $*$-rings which are lattices. 
 The initial segments of a p.q.-Baer $*$-ring with Conrad's partial order 
are shown to be orthomodular posets.
\end{abstract}
 \noindent {\bf Keywords:} Conrad's partial order, p.q.-Baer $*$-ring, central cover, orthomodular set. 
 \section{Introduction}        
                A {\it $*$-ring} $R$ is a ring equipped with an involution $x \rightarrow x^* $,
                that is an additive anti-automorphism of period at most two. 
                An element $e$ of
                a $*$-ring $R$ is a {\it projection} if $e=e^2$ (idempotent) and $e=e^*$ (self adjoint). For
                a nonempty subset $B$ of $R$, we write $r_R(B)=\{x\in R ~|~bx=0,~for~ every~ b\in B\}$, and call the {\it right annihilator} of $B$
            in $R$. Similarly, we define the {\it left annihilator} of $B$ in $R$ (denoted by $l_R(B)$).   
            A ring is said to be abelian if its every idempotent is central. 
            A ring without nonzero nilpotent elements is called a reduced ring. 
            Let $P$ be a poset and $a,b \in P$, then the join of $a$ and $b$, denoted by
                           $a\vee b$ is defined as $a \vee b = \sup~ \{a, b\}$ and the meet of $a$ and $b$,
                           denoted by $a\wedge b$ is defined as $a \wedge b= inf~ \{a, b\}$.
                           A poset $P$ is said to be a pseudo lattice, if for $a, b \in P$, whenever $a, b$ have a common upper bound, then $a\wedge b$ and $a \vee b$ both exist. \\
              \indent  Kaplansky \cite{Kap} introduced Baer rings and Baer $*$-rings to
            abstract various properties of $AW^*$ algebras, von Neumann
            algebras and complete $*$-regular rings.
                 The concept of a Baer $*$-ring is naturally motivated in the study of functional analysis. 
               Early motivation for studying rings with involution came from rings of operators.\\               
                       \indent  The set of projections in a Rickart $*$-ring $R$ forms an orthomodular lattice under the partial order `$e\leq_p f$ if and only if $e=fe=ef$'. This lattice is extensively studied in \cite{Ber, Kap, ms2}.       
                               In \cite{Bak, Doli, Doli2, Hart, Sem} 
                               the authors studied partial orders on complex matrices or $ \mathscr{B}(H)$
                               (the algebra of all bounded linear operators on an infinite-dimensional Hilbert space $H$). 
                               In \cite{d, j, Kre} the authors studied partial orders on Rickart $*$-rings.                        
  Hartwig \cite{Hart} defined the plus partial order on the set of regular elements in a semigroup.                                   
 For $m \times n$ matrices over a division ring $D$ (that is $D_{m\times n}$). Hartwig \cite{Hart} 
 use the concept of rank $\rho(.)$ and obtained the following result, which characterized the plus order for the 
 ring $D_{m\times n}$.
 \begin{theorem} [Theorem 2, \cite{Hart}] \label{th001} Let $A, B \in D_{m \times n}$. Then $A \leq B$ 
 if and only if $\rho(B-A) = \rho(B)-\rho(A)$. In particular, rank-subtractivity is a partial-ordering relation 
 on $D_{m \times n}$.
 \end{theorem}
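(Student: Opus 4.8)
The plan is to work throughout with the column space $\mathcal{C}(X)$ and the row space $\mathcal{R}(X)$ of a matrix $X \in D_{m\times n}$, regarded as subspaces over the division ring $D$, and to exploit the classical facts that over a division ring the row rank and column rank of $X$ coincide with $\rho(X)$ and that every such $X$ is von Neumann regular, i.e.\ admits a reflexive inner inverse $X^-$ satisfying $XX^-X = X$ and $X^-XX^- = X^-$. I will read the plus order as $A \leq B$ iff $AA^- = BA^-$ and $A^-A = A^-B$ for some inner inverse $A^-$ of $A$. The single tool that does most of the work is the rank-additivity criterion: for $X, Y \in D_{m\times n}$ one has $\rho(X+Y) = \rho(X) + \rho(Y)$ if and only if $\mathcal{C}(X) \cap \mathcal{C}(Y) = 0$ and $\mathcal{R}(X) \cap \mathcal{R}(Y) = 0$. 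Subadditivity $\rho(X+Y) \leq \rho(X) + \rho(Y)$ always holds, and it is the equality case that carries the content.

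For the forward implication I would assume $A \leq B$ and set $F = AA^- = BA^-$ and $E = A^-A = A^-B$, both idempotent. A short computation gives $FA = A$ and $FB = A$, hence $F(B-A) = 0$, and symmetrically $(B-A)E = 0$. Since $F$ is idempotent with $\mathcal{C}(F) = \mathcal{C}(A)$, any vector in $\mathcal{C}(A) \cap \mathcal{C}(B-A)$ is simultaneously fixed and annihilated by $F$, so it vanishes; thus $\mathcal{C}(A) \cap \mathcal{C}(B-A) = 0$, and dually $\mathcal{R}(A) \cap \mathcal{R}(B-A) = 0$. Applying the rank-additivity criterion to $B = A + (B-A)$ then yields $\rho(B) = \rho(A) + \rho(B-A)$, which is the asserted identity.

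The converse is where the real work lies. Starting from $\rho(B-A) = \rho(B) - \rho(A)$, the equality case of the criterion forces $\mathcal{C}(A) \cap \mathcal{C}(B-A) = 0$ and $\mathcal{R}(A) \cap \mathcal{R}(B-A) = 0$, and a dimension count upgrades these to the direct-sum decompositions $\mathcal{C}(B) = \mathcal{C}(A) \oplus \mathcal{C}(B-A)$ and $\mathcal{R}(B) = \mathcal{R}(A) \oplus \mathcal{R}(B-A)$. The obstacle is to manufacture from this geometric data an inner inverse $A^-$ witnessing the order. I would choose an idempotent $F$ projecting onto $\mathcal{C}(A)$ along a complement that contains $\mathcal{C}(B-A)$, and an idempotent $E$ projecting onto $\mathcal{R}(A)$ along a complement containing $\mathcal{R}(B-A)$; over a division ring one can then produce a reflexive inner inverse $A^-$ of $A$ realizing $AA^- = F$ and $A^-A = E$. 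Reflexivity is exactly what makes everything close up: from $F(B-A) = 0$ together with $A^-AA^- = A^-$ one gets $A^-(B-A) = A^-AA^-(B-A) = A^-F(B-A) = 0$, so $A^-B = A^-A$, and dually $(B-A)A^- = 0$ gives $BA^- = AA^-$. Hence $A \leq B$.

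Finally, for the \emph{in particular} clause I would verify the order axioms directly from the rank identity, which is now painless. Reflexivity is just $\rho(0) = 0$. For antisymmetry, $A \leq B$ and $B \leq A$ give $\rho(B-A) = \rho(B) - \rho(A)$ and $\rho(B-A) = \rho(A-B) = \rho(A) - \rho(B)$, since rank is unchanged under negation, whence $\rho(B-A) = 0$ and $A = B$. Transitivity needs no new geometry: if $A \leq B$ and $B \leq C$ then $\rho(C) = \rho(A) + \rho(B-A) + \rho(C-B)$, and writing $C - A = (B-A) + (C-B)$ gives $\rho(C-A) \leq \rho(B-A) + \rho(C-B) = \rho(C) - \rho(A)$, while subadditivity applied to $C = A + (C-A)$ gives the reverse inequality; hence $\rho(C-A) = \rho(C) - \rho(A)$, i.e.\ $A \leq C$.
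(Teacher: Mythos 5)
Your proposal cannot be compared against an internal proof, because the paper does not prove this statement at all: it is quoted, with attribution, as Theorem 2 of Hartwig \cite{Hart}, and serves only as motivation for the analogous characterization of Conrad's order via central covers ($a\leq b$ if and only if $C(b-a)=C(b)-C(a)$, Theorem \ref{ncr202}). Judged on its own merits, your argument is correct. The forward direction is clean: from $F=AA^-=BA^-$ and $E=A^-A=A^-B$ you correctly get $F(B-A)=0$, $(B-A)E=0$, and since $F$ (resp. $E$) is an idempotent fixing $\mathcal{C}(A)$ (resp. $\mathcal{R}(A)$), both intersections $\mathcal{C}(A)\cap\mathcal{C}(B-A)$ and $\mathcal{R}(A)\cap\mathcal{R}(B-A)$ vanish, and the rank-additivity criterion gives the identity. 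The two ingredients you invoke without proof are both genuine classical facts that survive the passage from fields to division rings: (i) the Marsaglia--Styan equality criterion $\rho(X+Y)=\rho(X)+\rho(Y)$ if and only if both column and row spaces meet trivially (the kernel/annihilator argument goes through for left and right vector spaces over $D$), and (ii) the existence, for a regular $A$ and idempotents $F,E$ with $\mathcal{C}(F)=\mathcal{C}(A)$, $\mathcal{R}(E)=\mathcal{R}(A)$, of a reflexive inner inverse with $AA^-=F$, $A^-A=E$ (take $A^-=EGF$ for any inner inverse $G$ of $A$; one checks $AA^-=F$, $A^-A=E$, and reflexivity follows). Your observation that reflexivity of $A^-$ is exactly what converts the geometric data $F(B-A)=0$, $(B-A)E=0$ into the order relations $A^-B=A^-A$, $BA^-=AA^-$ is the right key step, and the verification of the order axioms from the rank identity (in particular transitivity via the two-sided squeeze on $\rho(C-A)$) is the standard argument and is correct. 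For comparison with the cited source: Hartwig's own treatment is more algebraic, working with generalized-inverse identities directly, whereas your route through subspace decompositions makes the equality case of rank subadditivity transparent; the price is that you must quote the two auxiliary facts above, which in a self-contained write-up should be proved or precisely referenced.
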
 
 Also in the same paper \cite{Hart}, Hartwig posed the following open problems.\\
 {\bf Problem 1:} Can one induce a partial ordering on a ring $R$, by a subtractive rank-like function 
 $\rho:R\rightarrow G$, where $G$ is a well-ordered abelian group and $\rho(b-a)=\rho(b)-\rho(a)$?   \\
{\bf Problem 2:} Does $a\leq c$, $b\leq c$, $aR\cap bR=\{0\}=Ra\cap Rb\Rightarrow a+b \leq c$?\\
 \indent Conrad \cite{Con} extended the work of Abian \cite{Ab} by showing that a ring $R$ is 
          partially ordered by the relation $a\leq_c b$ if and only if $arb=ara$ for all $r\in R$ 
          (this is called Conrad's relation) precisely when 
          it is semiprime.         
          Burgess and Raphael \cite{Bur} proved that this relation, when defined on a semigroup $S$, is
          a partial order whenever $S$ is weakly separative. \\         
        \indent  Birkenmeier et al. \cite{Bir4} introduced principally quasi-Baer
         (p.q.-Baer) $*$-rings.  A $*$-ring $R$ is said to be {\it a p.q.-Baer $*$-ring} if,
   for every principal right ideal $aR$ of $R$, $r_R(aR)=eR$, where $e$ is a projection in
$R$. From the above definition, it follows that $l_R(aR)=Rf$ for a suitable projection $f$.
There is an abelian p.q.-Baer $*$-ring which is not a Rickart $*$-ring. Also,
 reduced Rickart $*$-rings are p.q.-Baer $*$-rings. In \cite{Bir4}, Birkenmeier et al. have given examples  p.q.-Baer $*$-rings those are not Rickart $*$-rings or quasi-Baer $*$-rings. It is easy to observe that p.q.-Baer $*$-rings are semiprime. Therefore Conrad's relation is a 
 partial order on a p.q.-Baer $*$-ring.\\
\indent  Let $R$ be a $*$-ring and $x \in R$, we say that $x$
            possesses a {\it central cover} if there exists a smallest central
            projection $h$ such that $hx=x$. If such a projection $h$ exists,
            then it is unique, and is called the central cover of $x$, denoted
            by $h=C(x)$ (see \cite{Ber}). In \cite{Anil} the authors proved the existence of central cover of every element of a p.q.-Baer $*$-ring. 
 In the second section of this paper, we characterize Conrad's partial order on p.q.-Baer $*$-rings in terms of central covers. Also, we prove the result similar to Theorem \ref{th001}, in connection to Problem 1.\\
 \indent In \cite{Black}, Blackwood et al. answered Problem 2 negatively for the minus partial order on the ring of matrices over a field. In the third section, we answer Problem 2 positively, for p.q.-Baer $*$-rings with Conrad's partial order.\\
  \indent Janowitz \cite{j} proved that the initial segments of an arbitrary Rickart $*$-ring
  with the $*$-order are orthomodular posets. The same result is proved by Kr$\bar{e}$mere \cite{Kre}
  for the left-star order. In the last section, we prove that the initial segments of a p.q.-Baer $*$-ring
  with Conrad's partial order 
  are orthomodular posets. 
  \section{Conrad's Relation on p.q.-Baer $*$-rings}  
  In this section, we characterize Conrad's partial order on a p.q.-Baer $*$-ring in terms of  
  central covers of elements. Also, we construct a subtractive function in terms of central covers, 
  which induces Conrad's partial order on a p.q.-Baer $*$-ring.
  \begin{remark} \label{rm101}Let $R$ be a $*$-ring and $P(Z(R))$ denotes the set of central projections of $R$.
  \begin{enumerate}
  \item If we restrict the Conrad's relation to the set $P(Z(R))$,
  then the relation becomes a partial order on $P(Z(R))$.
  Further, for $e,f \in P(Z(R))$, $e \leq f$ if and only if $e=ef$.
  \item For any $e\in P(Z(R))$ the central cover $C(e)$ exists and $C(e)=e$. 
  Moreover, whenever $C(x)$ exists for some $x\in R$, then for any $e \in P(Z(R))$, 
  the central cover $C(ex)$ exists and $C(ex)=e C(x)$.
  \item Let $a \in R$. If $C(a)$ exists in $R$, then $C(a^*)$ exists in $R$ and $C(a^*)=C(a)$ (see \cite{Anil}).
    \end{enumerate}  
  \end{remark} 
  Hence fourth, $\leq$ denotes the Conrad's partial order on p.q.-Baer $*$-ring.
\begin{lemma} \label{lm201}  Let $R$ be a $*$-ring and $x \in R$. Let $e\in R$ be a central projection in $R$ such that
 (1) $xe=x$ and (2) $xRy=0$ implies $ey=0$. Then $e=C(x)$. 
\end{lemma}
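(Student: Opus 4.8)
The plan is to verify directly that $e$ meets the universal property defining the central cover, namely that $e$ is the smallest central projection $h$ with $hx=x$. Hypothesis (1) already exhibits $e$ as one member of this family: since $e$ is central, $xe=x$ forces $ex=x$ as well, so $e$ is a central projection fixing $x$. It then remains only to establish minimality, i.e.\ that $e\le h$ in the projection order for every central projection $h$ with $hx=x$; by Remark~\ref{rm101}(1) this is the same as showing $e=eh$.

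To obtain $e=eh$, I would feed hypothesis (2) a carefully chosen witness. The idea is to put $y=e-eh$. Being a difference of products of central projections, $y$ is again central, so it can be slid freely past any $r\in R$. The decisive step is the computation $xRy=0$: for arbitrary $r\in R$, centrality lets me move $e$ and $eh$ across $r$, and the two fixing relations $xe=x$ (from (1)) and $xh=hx=x$ (from $h$ central together with $hx=x$) collapse both $xre$ and $xr(eh)$ down to $xr$. Thus $xry=xre-xr(eh)=xr-xr=0$, and hence $xRy=0$.

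Applying hypothesis (2) to this $y$ now gives $ey=0$, that is $e(e-eh)=0$. Using $e^{2}=e$ and the commutativity of the central idempotents, this reduces at once to $e-eh=0$, i.e.\ $e=eh$, which is precisely $e\le h$. Since $h$ was an arbitrary central projection fixing $x$, the projection $e$ is the least such, and therefore $e=C(x)$ by the defining property (and uniqueness) of the central cover.

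The only genuine obstacle is the verification that $xRy=0$; everything else is bookkeeping with central idempotents. I would take care there to invoke \emph{both} fixing relations $xe=x$ and $xh=x$, and to exploit centrality to commute $e$ and $h$ with the arbitrary $r$, since it is exactly this interplay that forces the two terms to cancel. I would also note that no identity element is required anywhere: the witness $y=e-eh$ is built from elements already present, so the argument is valid for an arbitrary $*$-ring as stated.
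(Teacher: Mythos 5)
Your proof is correct and follows essentially the same route as the paper's: both establish minimality by feeding hypothesis (2) a central witness built from an arbitrary central projection $h$ with $hx=x$, then reading off $e=eh$. The only difference is the witness — the paper takes $y=1-h$ (so that $x(1-h)=0$ gives $xR(1-h)=0$ and then $e(1-h)=0$), whereas you take $y=e-eh$; your choice is a minor refinement in that it never invokes the identity element, but the computation and the conclusion are the same.
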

 \begin{proof} To prove that $e=C(x)$, it is sufficient to prove that  $e$ is the smallest central projection with $xe=x$.
    Let $e'\in R$ be a central projection such that $xe'=x$. Then $x(1-e')=0$. Since $1-e'$ is central,
     $xR(1-e')=0$. By condition (2), we have $e (1-e')=0$ and hence $e=ee'$.
    Therefore $e \leq e'$. Thus $e=C(x)$.  
 \end{proof}
  \indent  The following result \cite{Anil}, 
   gives the existence of central cover of every element in a p.q.-Baer $*$-ring.
    \begin{theorem} [Theorem 2.3, \cite{Anil}] \label{p1th101} Let $R$ be a p.q.-Baer $*$-ring and $x\in R$.
              Then $x$ has a central cover $e \in R$. Further,
              $xRy=0$ if and only if $yRx=0$ if and only if $ey=0$.\\
              That is $r_R(xR)=r_R(eR)=l_R(Rx)=l_R(Re)=(1-e)R=R(1-e)$.
  \end{theorem}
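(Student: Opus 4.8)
The plan is to construct the central cover explicitly from the p.q.-Baer hypothesis and then verify it through Lemma \ref{lm201}. Since $xR$ is a principal right ideal, the defining property of a p.q.-Baer $*$-ring supplies a projection $g$ with $r_R(xR)=gR$. Writing $e=1-g$, I expect $e$ to be the central cover of $x$; but to apply Lemma \ref{lm201} I must first know that $g$ (equivalently $e$) is \emph{central}, and this is the step I expect to carry the weight of the argument.

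Establishing centrality of $g$ is the crux. First I would observe that $r_R(xR)$ is automatically two-sided: it is clearly a right ideal, and if $xRy=0$ then $xR(sy)=x(Rs)y\subseteq xRy=0$ for every $s\in R$, so it is a left ideal as well. Hence $gR$ is a two-sided ideal, so $Rg\subseteq gR$; since every $w\in gR$ satisfies $gw=w$, the inclusion $rg\in gR$ gives $grg=rg$, that is, $(1-g)Rg=0$. Now I would invoke semiprimeness (p.q.-Baer $*$-rings are semiprime, as noted in the introduction): for each $s\in R$ set $u=gs(1-g)$, so that $uRu=gs\,[(1-g)Rg]\,s(1-g)=0$, and semiprimeness forces $u=0$, giving $gR(1-g)=0$. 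Combining $(1-g)Rg=0$ with $gR(1-g)=0$ and decomposing $gr=grg+gr(1-g)$ and $rg=grg+(1-g)rg$, I obtain $gr=grg=rg$ for all $r\in R$, so $g$ is central.

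With $g$ central, $e=1-g$ is a central projection, and I can check the two hypotheses of Lemma \ref{lm201}. Since $g\in r_R(xR)$ we have $xRg=0$, and taking $r=1$ gives $xg=0$, hence $xe=x$; this is condition (1). For condition (2), if $xRy=0$ then $y\in r_R(xR)=gR$, so $gy=y$ and therefore $ey=(1-g)y=0$. Lemma \ref{lm201} then yields $e=C(x)$, which establishes the existence of the central cover.

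Finally I would read off the stated equivalences. From $r_R(xR)=gR=(1-e)R$ we get $xRy=0\iff y\in(1-e)R\iff ey=0$. For the symmetric condition I would pass to adjoints, using $(yRx)^*=x^*Ry^*$ to get $yRx=0\iff x^*Ry^*=0$; applying the equivalence just proved to $x^*$ gives $x^*Ry^*=0\iff C(x^*)y^*=0$, and by Remark \ref{rm101}(3) we have $C(x^*)=C(x)=e$. Since $e$ is a self-adjoint central projection, $ey^*=0\iff ey=0$, so $yRx=0\iff ey=0$ as well. The four annihilator identities $r_R(xR)=r_R(eR)=l_R(Rx)=l_R(Re)=(1-e)R=R(1-e)$ then follow by unwinding each condition through the centrality of $e$. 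The only genuinely delicate point is the centrality step of the second paragraph, where semiprimeness is essential; the remainder is bookkeeping with the central projection $e$.
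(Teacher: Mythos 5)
Your proof is correct, and there is actually nothing in the paper to compare it against: Theorem \ref{p1th101} is quoted from the reference \cite{Anil} (listed as ``Communicated''), and no proof of it appears in the present text, so you have supplied an argument where the paper supplies only a citation. Your route is the natural one and it meshes with the paper's own toolkit. The key step --- centrality of the projection $g$ generating $r_R(xR)$ --- is the classical fact that an idempotent generating a two-sided annihilator ideal in a semiprime ring is central, and your execution is exactly right: $r_R(xR)$ is two-sided, so $Rg\subseteq gR$ gives $(1-g)Rg=0$; then $u=gs(1-g)$ satisfies $uRu=0$, and semiprimeness (which the introduction explicitly records for p.q.-Baer $*$-rings) kills $u$, giving $gR(1-g)=0$ and hence $gr=grg=rg$. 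Feeding $e=1-g$ into Lemma \ref{lm201} is evidently what that lemma was placed in the paper for. Two small remarks on your handling of the symmetric condition $yRx=0\iff ey=0$. First, Remark \ref{rm101}(3), which you invoke, is itself only cited from \cite{Anil}; if you want full self-containment, it is a one-liner from what you already have: $C(x)$ is a self-adjoint central projection fixing $x$, hence it fixes $x^*$, and any central projection fixing $x^*$ fixes $x$ by taking adjoints, so $C(x^*)=C(x)$. Second, you could bypass adjoints altogether: in a semiprime ring $xRy=0$ already implies $yRx=0$, since any $u=yrx$ satisfies $uRu\subseteq yR(xRy)Rx=0$. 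Either way, the final annihilator identities follow from centrality of $e$ exactly as you say.
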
  
         In the following theorem we characterize the  Conrad's relation in terms of central cover.         
               \begin{lemma} \label{th201} Let $R$ be a p.q.-Baer $*$-ring and $a, b \in R$. Then the following statements are equivalent.
               \begin{enumerate}
                \item $a^*rb=a^*ra$ for all $r \in R$.
                \item $a=C(a) b$.
                 \item $arb=ara$ for all $r \in R$ (that is $a \leq b$).               
               \end{enumerate}                
               \end{lemma}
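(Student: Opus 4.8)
The plan is to treat statement (2), the clean identity $a = C(a)\,b$, as the hub and show it is equivalent to each of (1) and (3) separately; this keeps every implication short and avoids chasing a single long cycle. Throughout I write $e = C(a)$ and lean on two facts: the defining property $ea = a$ of the central cover, and Theorem \ref{p1th101}, which converts an annihilator condition into a statement about $e$, namely $xRy = 0 \iff C(x)\,y = 0$.

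For the two implications out of (2) the idea is a direct computation exploiting that $e$ is a \emph{central} projection. Assuming $a = eb$, for every $r \in R$ one gets $arb = ebrb$ while $ara = (eb)r(eb) = e^2 brb = ebrb$, so $arb = ara$ and (3) holds. For (1) I take adjoints: since $e^* = e$ and $e$ is central, $a^* = (eb)^* = b^*e$, whence $a^*rb = eb^*rb$ and $a^*ra = (b^*e)r(eb) = e^2 b^*rb = eb^*rb$, giving $a^*rb = a^*ra$.

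For the converse implications the idea is to rewrite the pointwise relation as a single annihilator equation and then apply Theorem \ref{p1th101}. From (3), the identity $arb = ara$ for all $r$ says exactly $aR(b-a) = 0$; the theorem then yields $e(b-a) = 0$, and since $ea = a$ this gives $eb = a$, i.e. $a = C(a)\,b$, which is (2). From (1), $a^*rb = a^*ra$ for all $r$ says $a^*R(b-a) = 0$; here I apply the theorem to $a^*$ rather than $a$, using Remark \ref{rm101}(3) that $C(a^*) = C(a) = e$, to conclude again $e(b-a) = 0$ and hence $a = eb$.

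The computations are routine, so there is no single hard obstacle; the one point that requires care is the role of the involution in (1). Unlike (3), statement (1) carries $a^*$ on the left, so the annihilator $a^*R(b-a)=0$ is governed by the central cover of $a^*$, not of $a$; the argument only closes because $C(a^*)=C(a)$, so I must invoke Remark \ref{rm101}(3) at exactly that step. Everything else reduces to sliding the central idempotent $e$ freely through products and using $e^2 = e$ together with $ea = a$.
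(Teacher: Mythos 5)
Your proposal is correct and, despite the hub-and-spoke organization (proving $(1)\Leftrightarrow(2)$ and $(2)\Leftrightarrow(3)$ rather than the paper's cycle $(1)\Rightarrow(2)\Rightarrow(3)\Rightarrow(1)$), it consists of exactly the same four ingredients as the paper's proof: the annihilator-to-central-cover conversion of Theorem \ref{p1th101}, the identity $C(a^*)=C(a)$ from Remark \ref{rm101}, and the two routine computations that slide the central projection $C(a)$ through products. The only cosmetic difference is that your $(2)\Rightarrow(1)$ takes adjoints of $a=C(a)b$ directly, where the paper substitutes and invokes $C(a^*)=C(a)$ a second time; the mathematics is the same.
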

                \begin{proof} $(1)\Rightarrow (2)$: By (1), $a^*r(b-a)=0$ for all $r \in R$, that is $a^*R(b-a)=0$. 
                By Theorem \ref{p1th101}, we get $C(a^*) (b-a)=0$.
                This implies $C(a) (b-a)=0$ (by Remark \ref{rm101}). Consequently, $a= C(a) b$.\\
                $(2)\Rightarrow (3)$: For $r \in R$, we have by (2), $ara=arC(a)b=C(a)arb=arb$. 
                Therefore $arb=ara$ for all $r \in R$.\\
                 $(3)\Rightarrow (1)$:  By the similar arguments as in the proof of $(1)\Rightarrow (2)$, we get $a= C(a) b$.
                 Further, for $r\in R$, $a^*ra=a^*rC(a)b=C(a)a^*rb=C(a^*)a^*rb=a^*rb$.  
                  Thus $a^*rb=a^*ra$ for all $r \in R$.
                   \end{proof} 
           The above lemma essentially says that, in a p.q.-Baer $*$-ring $R$, for $a, b \in R$, $a\leq b$ if and only if $a=C(a) b$. Therefore, we use the relation $a=C(a) b$ as Conrad's relation (partial order) on a p.q.-Baer $*$-ring.          
        The following lemma leads to the result which constructs a subtractive function on a p.q.-Baer $*$-ring.                
        \begin{lemma} \label{lm202} Let $R$ be a p.q.-Baer $*$-ring and $a, b \in R$ be such that $a \leq b$. Then,
               \begin{enumerate}
                \item $C(a)\leq C(b)$ and $a=a C(b)= b C(a)$
                \item $C(b-a)=C(b) - C(a)$.
                 \end{enumerate} 
        \end{lemma}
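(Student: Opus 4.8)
The plan is to work entirely through the characterization established in Lemma~\ref{th201}, namely that for $a,b$ in a p.q.-Baer $*$-ring $R$ the relation $a \leq b$ is equivalent to $a = C(a)\,b$, together with the two facts about central covers recorded in Remark~\ref{rm101}: that $C(e) = e$ for a central projection $e$, and, more importantly, that $C(ex) = e\,C(x)$ whenever $e \in P(Z(R))$ and $C(x)$ exists. Since $R$ is p.q.-Baer, Theorem~\ref{p1th101} guarantees that $C(a)$, $C(b)$ and $C(b-a)$ all exist, so these manipulations are legitimate.

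For part (1), I would start from the hypothesis written as $a = C(a)\,b$ and apply the central cover operator to both sides. Because $C(a)$ is a central projection, Remark~\ref{rm101} gives $C(C(a)\,b) = C(a)\,C(b)$, while the left-hand side is simply $C(a)$; hence $C(a) = C(a)\,C(b)$, which by the description of the order on $P(Z(R))$ in Remark~\ref{rm101} is exactly $C(a) \leq C(b)$. The two displayed identities are then routine: using $b\,C(b) = b$ and the centrality of $C(b)$ one gets $a\,C(b) = C(a)\,b\,C(b) = C(a)\,b = a$, and since $C(a)$ is central, $a = C(a)\,b = b\,C(a)$ immediately.

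For part (2) the key observation is that $a = C(a)\,b$ lets us rewrite the difference as $b - a = b - C(a)\,b = (1 - C(a))\,b$, where $1 - C(a)$ is again a central projection. Applying $C$ and using the product formula from Remark~\ref{rm101} once more gives $C(b-a) = C\big((1-C(a))\,b\big) = (1 - C(a))\,C(b) = C(b) - C(a)\,C(b)$, and substituting $C(a)\,C(b) = C(a)$ from part (1) yields $C(b-a) = C(b) - C(a)$, as required. Alternatively one could verify the two hypotheses of Lemma~\ref{lm201} for the candidate central projection $e = C(b) - C(a)$ and $x = b - a$ — checking $(b-a)e = b-a$ via the identities of part (1), and that $(b-a)Ry = 0$ forces $ey = 0$ through Theorem~\ref{p1th101} — but the direct computation above is shorter.

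The steps are short and largely mechanical; the only place demanding genuine care is the opening move of part (1), recognizing that applying the central cover to $a = C(a)\,b$ and invoking $C(ex) = e\,C(x)$ is what converts the ring-level relation $a \leq b$ into the order relation $C(a) \leq C(b)$ between central projections. Once $C(a) \leq C(b)$ is in hand, everything else — including the clean factorization $b - a = (1 - C(a))\,b$ that drives part (2) — follows without real obstruction.
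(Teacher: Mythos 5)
Your proposal is correct, and part (1) coincides with the paper's own argument (apply $C$ to $a=C(a)b$, use $C(C(a)b)=C(a)C(b)$ from Remark~\ref{rm101}, then multiply through). Part (2), however, takes a genuinely different and shorter route. The paper never factors the difference: it takes $e=C(b)-C(a)$ as a candidate, checks $(b-a)e=b-a$ by expanding with the identities of part (1), then runs a chain of annihilator equivalences --- $(b-a)Ry=0$ iff $bR(C(b)-C(a))y=0$ iff $C(b)(C(b)-C(a))y=0$ iff $(C(b)-C(a))y=0$, the middle step via Theorem~\ref{p1th101} --- and concludes by Lemma~\ref{lm201}; this is exactly the ``alternative'' you sketch and then set aside. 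Your main argument instead writes $b-a=(1-C(a))b$ and applies the product formula $C(ex)=eC(x)$ of Remark~\ref{rm101}(2) to the central projection $1-C(a)$, which is legitimate since $C(b)$ exists by Theorem~\ref{p1th101} and is clearly the cleaner computation. What the paper's longer route buys is self-containedness: it leans on Lemma~\ref{lm201} and the annihilator machinery of Theorem~\ref{p1th101}, which are proved or cited with proof, rather than on the unproved multiplicative claim of the Remark (though the paper itself already invokes that Remark in part (1), so your reliance on it is no worse); it also rehearses the annihilator technique that reappears in the later lemmas of the paper. Your route buys brevity and makes transparent \emph{why} the identity holds --- subtraction of $a$ from $b$ is just cutting $b$ down by the central projection $1-C(a)$.
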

        \begin{proof} $(1)$: As $a \leq b$, $a=C(a) b$ and hence $C(a)=C(C(a) b) =C(a) C(b)$ (by Remark \ref{rm101}). This yields $C(a)\leq C(b)$.
        By multiplying by $a$ to both sides of $C(a)=C(a)C(b)$ we get, $a =a C(b)$. Therefore $a=a C(b)= b C(a)$.\\
                                 \noindent $(2)$: As, $C(a) \leq C(b)$, $C(b)-C(a)$ is a central projection with $(b-a)(C(b)-C(a))=b C(b) - b C(a) - a C(b) + a C(a)=b - a-a+a =b-a$ (by part (1)).
                Further for $y \in R$, $(b-a)Ry=0$ if and only if 
                $bry=ary$ for all $r \in R$ if and only if
                $bC(b)ry=bC(a)ry$ for all $r \in R$ if and only if
                $bR(C(b)-C(a))y=0$ if and only if 
                $C(b) (C(b)-C(a))y=0 $ (by Theorem \ref{p1th101})
                if and only if $(C(b)-C(a))y=0$.
                Thus, by Lemma \ref{lm201}, $C(b-a)= C(b)-C(a)$, as required.  
        \end{proof}
      In the above lemma we have proved that in a p.q.-Bear $*$-ring $R$, for $a, b \in R$, if $a\leq b$ then $C(b-a)=C(b)-C(a)$. The following lemma gives a sufficient condition so that the converse of this statement is true.  
        \begin{lemma} \label{nth202} Let $R$ be a p.q.-Baer $*$-ring in which $2$ is invertible. Let $a, b \in R$  
       be such that $C(b-a)=C(b)-C(a)$. Then  $a \leq b$.         
        \end{lemma}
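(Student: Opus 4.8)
The plan is to reduce $a \le b$ to the identity $a = C(a)b$ furnished by Lemma \ref{th201}, and to read this identity off directly from the defining property of the central cover. Write $p = C(a)$ and $q = C(b)$ for the two central projections involved, and set $c = b - a$, so that the hypothesis becomes $C(c) = q - p$.

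First I would note that $C(c)$, being a central cover, is a central projection, so $q - p$ is idempotent. Expanding $(q-p)^2 = q - p$ and using that $p, q$ commute and are idempotent collapses to $2p = 2pq$. This is exactly where the invertibility of $2$ enters: cancelling the $2$ gives $p = pq$, i.e.\ $C(a) = C(a)C(b)$, which is $C(a) \le C(b)$ in the sense of Remark \ref{rm101}(1).

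Next I would convert this comparison of covers into an annihilation. From $C(a) = C(a)C(b)$ one computes $C(a)\,C(c) = C(a)\bigl(C(b) - C(a)\bigr) = C(a)C(b) - C(a) = 0$, so the central covers of $a$ and of $b-a$ are orthogonal. Since $C(c)\,c = c$ by the definition of central cover, multiplying on the left by $C(a)$ yields $C(a)(b-a) = C(a)\,C(c)\,c = 0$.

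Finally, expanding $C(a)(b-a) = 0$ gives $C(a)b = C(a)a = a$, the last step being the defining relation $C(a)a = a$. Hence $a = C(a)b$, and Lemma \ref{th201} delivers $a \le b$. I expect the only delicate point to be the cancellation of $2$; everything else is a routine manipulation of commuting central projections, so the hypothesis that $2$ is invertible should be used exactly once, precisely to pass from idempotency of $q-p$ to $C(a) \le C(b)$.
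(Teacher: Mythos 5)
Your proof is correct, and its skeleton matches the paper's: use idempotency of $C(b)-C(a)$ plus invertibility of $2$ to get $C(a)=C(a)C(b)$, deduce the orthogonality $C(a)\,C(b-a)=0$, and then convert this into $C(a)(b-a)=0$, which gives $a=C(a)b$ and hence $a\leq b$ via Lemma \ref{th201}. The one genuine divergence is in the conversion step: the paper invokes Theorem \ref{p1th101} to pass from $C(b-a)C(a)=0$ to $(b-a)RC(a)=0$ and then specializes $r=1$, which is exactly where the p.q.-Baer machinery is used; you instead bypass that theorem entirely by writing $C(a)(b-a)=C(a)\,C(b-a)\,(b-a)=0$, using only the defining relation $C(c)\,c=c$ of the central cover together with centrality. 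Your route is slightly more elementary and self-contained: once the central covers in question are known to exist, no further appeal to the p.q.-Baer structure is needed, whereas the paper re-enters it through Theorem \ref{p1th101}. The trade-off is negligible here since the p.q.-Baer hypothesis is still needed to guarantee that $C(a)$, $C(b)$, $C(b-a)$ exist at all, but your streamlining of that step is a genuine (if small) improvement in economy.
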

        \begin{proof} Let $a, b \in R$ be such that $C(b-a)=C(b)-C(a)$. Then $(C(b)-C(a))^2=C(b)-C(a)$, which yields $2 C(b) C(a) = 2 C(a)$. Since $2$ is invertible element in $R$, we have $C(b) C(a) = C(a)$. 
       Further, $C(b-a)C(a)=(C(b)-C(a))C(a)=0$. By Theorem \ref{p1th101}, $(b-a)RC(a)=0$.
        Consequently, $(b-a)C(a)=0$ and hence $bC(a)=a$. Therefore $a \leq b$.         
        \end{proof}
        The following theorem characterises Conrad's partial order in terms of central covers, which gives a result similar to Theorem \ref{th001}.
         \begin{theorem} \label{ncr202} Let $R$ be a p.q.-Baer $*$-ring in which $2$ is invertible and let $a, b \in R$. Then  $a \leq b$ if and only if $C(b-a)=C(b)-C(a)$.          
         \end{theorem}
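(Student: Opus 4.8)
The plan is to read this theorem off directly from the two lemmas that immediately precede it, since together they already supply both implications of the equivalence. I would not reprove anything; I would simply assemble the pieces.

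For the forward direction, suppose $a \leq b$. Then Lemma \ref{lm202}(2) gives exactly $C(b-a) = C(b) - C(a)$, and it is worth noting that this half needs no hypothesis on $2$ at all: it holds in every p.q.-Baer $*$-ring. For the converse, suppose $C(b-a) = C(b) - C(a)$. This is precisely the hypothesis of Lemma \ref{nth202}, whose conclusion is $a \leq b$. Combining the two directions yields the stated equivalence, and the invertibility of $2$ is used only in the converse, inherited from Lemma \ref{nth202}.

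If I instead had to argue the converse from scratch rather than quote Lemma \ref{nth202}, the crux --- and the sole place where invertibility of $2$ enters --- would be the cancellation step. Since $C(b-a)$ is a central projection and equals $C(b)-C(a)$, squaring gives $(C(b)-C(a))^2 = C(b)-C(a)$; expanding (using that $C(a)$ and $C(b)$ are commuting central idempotents) forces $2\,C(a)C(b) = 2\,C(a)$, and cancelling the invertible $2$ yields $C(a)C(b)=C(a)$, i.e. $C(a)\leq C(b)$. From there one computes $C(b-a)\,C(a) = (C(b)-C(a))C(a) = 0$, applies Theorem \ref{p1th101} to obtain $(b-a)R\,C(a)=0$ and hence $(b-a)C(a)=0$, so $b\,C(a)=a$; since $C(a)$ is central this reads $a = C(a)b$, which by Lemma \ref{th201} is exactly $a \leq b$. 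Thus the only genuine obstacle is the passage from idempotency of $C(b)-C(a)$ to $C(a)\leq C(b)$, which is what the hypothesis on $2$ resolves; everything else is bookkeeping with central covers already licensed by Theorem \ref{p1th101} and Lemma \ref{th201}.
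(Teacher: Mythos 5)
Your proof is correct and is essentially identical to the paper's: the paper also proves Theorem \ref{ncr202} by citing Lemma \ref{lm202} for the forward direction and Lemma \ref{nth202} for the converse, and your "from scratch" sketch of the converse reproduces the paper's own proof of Lemma \ref{nth202} step for step. Your observation that invertibility of $2$ is needed only for the converse is also accurate.
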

         \begin{proof} The proof follows from Lemmas \ref{lm202} and \ref{nth202}.         	
       	\end{proof}     
  \section{When does a p.q.-Baer $*$-Ring become a Lattice?}     
    Hartwig \cite{Hart1} showed that a star-regular ring $R$ forms a pseudo upper semilattice under the star-orthogonal partial ordering. That is, for every $a, b \in R$, $a, b$ have a common upper bound if and only if $a \vee b$ exists in $R$.
    In this section, we prove that, in a p.q.-Baer $*$-ring $R$ with Conrad's partial order, for every $a, b \in R$, $a, b$ have a common upper bound if and only if $a \vee b$ exists in $R$. Also, we give a sufficient condition for p.q.-Bear $*$-ring to be a lattice. As a consequence, we answer  Problem 2 positively for p.q.-Baer $*$-rings with Conrad's partial order. 
    \begin{definition} 
    In \cite{Con}, a concept of orthogonality is introduced as follows. Let $R$ be a semiprime ring and $a, b \in R$. Then $a$ is said be {\it orthogonal} to $b$ if $aRb=0$. In a p.q.-Baer $*$-ring this condition is equivalent to $C(a)C(b)=0$ (see \cite{Anil}). We write $a \perp b$, whenever a orthogonal to $b$. 
    \end{definition}            
    \indent Recall the following definition and theorem from \cite{Bur}. 
   \begin{definition}  Let $R$ be a semiprime ring. 
     For an ideal $I$ of $R$, $Ann~I~=\{r\in R ~|~ rI=0  \}$. 
     If for each ideal $I$, $Ann~I$ contains a nonzero central idempotent then $R$ is called {\it weakly $i$-dense}.
     $R$ is {\it orthogonally complete} if every orthogonal set has a supremum. 
    \end{definition}      
     \begin{theorem} [Theorem 9, \cite{Bur}] An orthogonally complete semiprime ring which is weakly $i$-dense 
     is complete.     
     \end{theorem}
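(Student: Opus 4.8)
Although this is Theorem 9 of \cite{Bur} and is quoted here only for later use, I sketch how I would establish it. Throughout I read ``complete'' as: the poset $(R,\le)$ is conditionally complete, i.e. every subset admitting an upper bound has a least upper bound. Since $0$ is the least element of $(R,\le)$ and every nonempty $S$ is bounded below by $0$, infima of lower-bounded sets then come for free (as the supremum of the bounded-above set of all lower bounds), so it suffices to produce suprema of upper-bounded subsets.

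The plan is to reduce the existence of arbitrary suprema to orthogonal completeness, using weak $i$-density as the device that manufactures orthogonal increments. First I would record the facts about Conrad's relation on a semiprime ring that carry the argument. Semiprimeness gives antisymmetry of $\le$ (Conrad's theorem, \cite{Con}) and the symmetry $aRb=0\iff bRa=0$. For orthogonal elements the join is the sum: if $a\perp b$ then $ar(a+b)-ara=arb=0$ for all $r$, so $a\le a+b$, similarly $b\le a+b$, and if $t$ is any common upper bound then $(a+b)rt=ara+brb=(a+b)r(a+b)$, giving $a+b=a\vee b$. Finally, for a central idempotent $e$ one has $ea\le a$, since $(ea)ra=e\,ara=(ea)r(ea)$; and if $e$ annihilates an ideal $J$ then $ea\perp b$ for every $b\in J$, because $(ea)rb=ar(eb)=0$.

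With these in hand, fix $S\subseteq R$ bounded above by $c$ (the case $S=\emptyset$ being trivial). Apply Zorn's Lemma to the orthogonal families $\{b_\alpha\}$ each of whose members satisfies $b_\alpha\le a$ for some $a\in S$; unions of chains of such families are again of this kind, so a maximal one $\{b_\alpha\}_\alpha$ exists. By orthogonal completeness $s:=\sup_\alpha b_\alpha$ exists, and since every $b_\alpha\le c$ we get $s\le c$. I claim $s=\sup S$. Suppose some $a\in S$ had $a\not\le s$; writing $J$ for the ideal generated by the $b_\alpha$, the failure should force $\operatorname{Ann}J\ne 0$, whence weak $i$-density yields a nonzero central idempotent $e\in\operatorname{Ann}J$ with $ea\ne 0$. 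By the previous paragraph $ea\le a\le c$ and $ea\perp b_\alpha$ for every $\alpha$, so adjoining $ea$ enlarges the orthogonal family, contradicting maximality; hence $a\le s$ for every $a\in S$. Conversely any upper bound $t$ of $S$ dominates each $b_\alpha$ (as $b_\alpha\le a\le t$), so $s\le t$. Thus $s=\sup S$, and $(R,\le)$ is complete.

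The step I expect to be the genuine obstacle is the implication $a\not\le s\Rightarrow\operatorname{Ann}J\ne 0$ together with the existence within it of a central idempotent $e$ meeting $a$ (that is, $ea\ne 0$). One must convert the order-theoretic failure $a\not\le s$ into the nonvanishing of the component of $a$ transverse to $J$, and then see that weak $i$-density detects this component by a genuinely \emph{central} idempotent rather than a mere one-sided annihilator; semiprimeness is what rules out nilpotent elements of $\operatorname{Ann}J$ obstructing this. Carrying this out uniformly for infinite families, instead of refining two comparable elements at a time, is exactly what promotes the pairwise pseudo-lattice behaviour to full completeness.
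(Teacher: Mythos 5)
The paper itself contains no proof of this statement: it is imported verbatim as Theorem~9 of \cite{Bur} (and in fact is only used as context for an example), so your attempt has to be judged on its own terms rather than against an argument in the text.

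Your preliminary reductions are all correct ($0$ is the least element, conditional completeness suffices, $a\perp b$ implies $a\vee b=a+b$ via the semiprime symmetry $aRb=0\Leftrightarrow bRa=0$, $ea\le a$ for central idempotents, and $eJ=0$ implies $ea\perp J$), and the Zorn's lemma architecture --- a maximal orthogonal family of elements lying below members of $S$, followed by its supremum $s$ from orthogonal completeness --- is the right one. But the step you flagged is a genuine gap, and in the form you propose it cannot be closed. Weak $i$-density applied to $J$ produces \emph{some} nonzero central idempotent $e$ with $eJ=0$; it gives you no control whatsoever over its relation to $a$. If $ea=0$ (perfectly possible: $\operatorname{Ann}J$ may be large while only a small corner of it meets the support of $a$), then adjoining $ea$ adjoins $0$, maximality is not contradicted, and the argument halts. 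In addition, the implication $a\not\le s\Rightarrow\operatorname{Ann}J\neq 0$ is asserted rather than proved.

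Both defects are repaired simultaneously by applying weak $i$-density not to $J$ but to the larger ideal $K=J+\operatorname{Ann}(\langle a\rangle)$, after first proving the inclusion $aR(s-a)\subseteq\operatorname{Ann}(K)$. For the $J$ part: semiprimeness makes Conrad's relation symmetric, $uR(v-u)=0$ iff $(v-u)Ru=0$, so $b_\alpha\le s$ and $b_\alpha\le c$ give $sr'b_\alpha=b_\alpha r'b_\alpha=cr'b_\alpha$ for all $r'$, while $a\le c$ gives $arc=ara$; hence $ar(s-a)r'b_\alpha=arcr'b_\alpha-arar'b_\alpha=0$, and semiprimeness upgrades $xRb_\alpha=0$ to $x\langle b_\alpha\rangle=0$, so every $x=ar(s-a)$ kills $J$. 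For the other part: $x\in aR\subseteq\langle a\rangle$ and $\langle a\rangle\cdot\operatorname{Ann}\langle a\rangle=0$ in a semiprime ring, so $x$ kills $\operatorname{Ann}\langle a\rangle$ as well. Consequently $a\not\le s$ forces $\operatorname{Ann}K\neq 0$ (which also yields your unproved claim, since $\operatorname{Ann}K\subseteq\operatorname{Ann}J$), and now any nonzero central idempotent $e\in\operatorname{Ann}K$ \emph{automatically} satisfies $ea\neq 0$: if $ea=0$, centrality gives $e\langle a\rangle=0$, i.e.\ $e\in\operatorname{Ann}\langle a\rangle$, whence $e=e^2\in e\,\operatorname{Ann}\langle a\rangle=0$, a contradiction. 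With this $e$ your maximality argument goes through exactly as written: $ea\le a\le c$, $ea\perp b_\alpha$ for all $\alpha$, and $ea\notin\{b_\alpha\}$ because $ea=b_{\alpha_0}$ would give $(ea)R(ea)=0$ and hence $ea=0$ by semiprimeness.
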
    
     We give an example of a commutative, reduced, weakly $i$-dense p.q.-Baer $*$-ring 
     which is not orthogonally complete. 
     \begin{example} Let $R=\{x \in \prod_{i=1}^{\infty} Q ~|~$ for almost all $i$, $x_i \in \mathbb Z  \}$.
           Then $R$ is a commutative $*$-ring with an identity involution. For $a=(a_1,a_2,\cdots)\in R$, 
           $r_R(a)=bR$ where $b=(b_1,b_2, \cdots)$ with $b_i=1$ if $a_i=0$; and $b_i=0$ if $a_i \neq 0$.
           Note that $b^2=b=b^*$. Therefore $R$ is a Rickart $*$-ring. 
           Since an abelian Rickart $*$-ring is a reduced p.q.-Baer $*$-ring, $R$ becomes a commutative reduced
           p.q.-Baer $*$-ring. Since every ideal of $R$ is a principal ideal and $R$ is a p.q.-Baer $*$-ring, therefore  by Theorem \ref{p1th101}, $R$ is weakly $i$-dense.
           Let $c_1=(\frac{1}{2},0,0,\cdots), c_2=(0,\frac{1}{2},0,0,\cdots), \cdots $, and 
           $S=\{c_n ~|~n \in \mathbb N \}$. Then $S$ is an orthogonal subset of $R$ which does not have
           supremum in $R$. Thus $R$ is not orthogonally complete.       
     \end{example} 
     In the following theorem, we prove that a p.q.-Baer $*$-ring forms a pseudo lattice with respect to Conrad's partial order.
  \begin{theorem} \label{th205} Let $R$ be a p.q.-Baer $*$-ring and $a, b \in R$ have a common upper bound. Then 
  \begin{enumerate}
  \item $aC(b)=b C(a)$;
   \item $a^*rb=C(a)b^*rb=C(b) a^*ra$ for all $r \in R$. Hence, $a^*b$ is self adjoint;
   \item $arb^*=C(a)brb^*=C(b)ara^*$ for all $r\in R$. Hence, $ab^*$ is self adjoint;
   \item $a \wedge b =a C(b)=b C(a)$; and
   \item $a \vee b=a+b- a \wedge b$. 
  \end{enumerate}  
  \end{theorem}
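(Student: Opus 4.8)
The plan is to fix a common upper bound $c$ of $a$ and $b$ and reduce everything to computations with the central projections $C(a),C(b)$. By Lemma~\ref{th201}, the relations $a\leq c$ and $b\leq c$ translate into $a=C(a)c$ and $b=C(b)c$; since $C(a),C(b)$ are central and self-adjoint, these two identities are the engine of the whole argument. For part~(1) I would simply compute $aC(b)=C(a)c\,C(b)=C(a)C(b)c$ and $bC(a)=C(b)c\,C(a)=C(a)C(b)c$, so $aC(b)=bC(a)$ at once.

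For parts~(2) and~(3) I would carry out the analogous substitutions. Writing $a^*=c^*C(a)$ and $b^*=c^*C(b)$ (using $C(a)^*=C(a)$ and $C(b)^*=C(b)$), each of the three expressions $a^*rb$, $C(a)b^*rb$, $C(b)a^*ra$ collapses to $C(a)C(b)\,c^*rc$, and dually each of $arb^*$, $C(a)brb^*$, $C(b)ara^*$ collapses to $C(a)C(b)\,crc^*$. The self-adjointness claims then follow by taking $r=1$: $a^*b=C(a)C(b)\,c^*c$ is the product of the central projection $C(a)C(b)$ with the self-adjoint element $c^*c$, hence self-adjoint, and likewise $ab^*=C(a)C(b)\,cc^*$.

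For part~(4) the natural candidate for the meet is $d:=aC(b)=bC(a)=C(a)C(b)c$. First I would compute its central cover: by Remark~\ref{rm101}, $C(d)=C(a)C(b)\,C(c)$, and since $a,b\leq c$ gives $C(a),C(b)\leq C(c)$ (Lemma~\ref{lm202}), this simplifies to $C(d)=C(a)C(b)$. Checking $C(d)a=C(a)C(b)c=d$ and $C(d)b=d$ shows $d\leq a$ and $d\leq b$, so $d$ is a lower bound. For maximality, given any lower bound $m$, Lemma~\ref{lm202} yields $C(m)\leq C(a)$ and $C(m)\leq C(b)$, whence $C(m)\leq C(a)C(b)=C(d)$; then $C(m)d=C(m)C(a)C(b)c=C(m)\,C(b)c=C(m)b=m$, so $m\leq d$ and $d=a\wedge b$.

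For part~(5) the candidate for the join is $s:=a+b-d=\bigl(C(a)+C(b)-C(a)C(b)\bigr)c=pc$, where $p=C(a)+C(b)-C(a)C(b)$ is a central projection (the join of $C(a)$ and $C(b)$); since $pC(c)=p$ one gets $C(s)=pC(c)=p$. That $s$ is an upper bound is the short computation $C(a)s=a$ and $C(b)s=b$. The main obstacle is the least-upper-bound check: for an arbitrary upper bound $v$ I must show the overlap term is independent of $v$, namely $C(a)C(b)v=d$. This is exactly where the two presentations $a=C(a)v$ and $b=C(b)v$ of the upper-bound condition are used together: $C(a)C(b)v=C(b)\,(C(a)v)=C(b)a=bC(a)=d$. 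Granting this, $pv=C(a)v+C(b)v-C(a)C(b)v=a+b-d=s=C(s)v$, so $s\leq v$, and therefore $s=a\vee b=a+b-a\wedge b$. The delicate points throughout are the reductions to $C(a)C(b)$ via $C(a),C(b)\leq C(c)$ and this final $v$-independence identity; the remaining manipulations are routine rewriting using centrality of the covers.
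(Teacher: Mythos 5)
Your proof is correct, and for parts (1)--(4) it follows essentially the paper's own route: fix a common upper bound $c$, rewrite $a=C(a)c$, $b=C(b)c$, and push everything through the centrality and self-adjointness of the covers (your explicit computation of $C(aC(b))=C(a)C(b)$ via Lemma~\ref{lm202} versus the paper's direct check $C(d)aC(b)=d$ for a lower bound $d$ is only a cosmetic difference). Where you genuinely diverge is part (5). The paper verifies that $s=a+b-a\wedge b$ is below an arbitrary upper bound $d$ by establishing the involution-based criterion of Lemma~\ref{th201}, namely $s^*rs=s^*rd$ for all $r$, which requires a long nine-term expansion of $(a^*+b^*-a^*C(b))r(a+b-aC(b))$ and repeated use of part (2). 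You instead identify the central cover of $s$ explicitly as the Boolean join $p=C(a)+C(b)-C(a)C(b)$ (via Remark~\ref{rm101} and $C(a),C(b)\leq C(c)$) and then check the defining relation $s=C(s)v$ directly, the only nontrivial step being the $v$-independence identity $C(a)C(b)v=C(b)a=a\wedge b$. This buys a substantially shorter and more structural argument --- it makes visible that the central cover of the join is the join of the central covers --- whereas the paper's computation, though heavier, stays entirely inside the $*$-characterization of Conrad's order and never needs to compute $C(s)$ at all. Both verifications are legitimate since Lemma~\ref{th201} makes the two criteria equivalent.
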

  \begin{proof} Let $a, b, c \in R$ and $c$ be a common upper bound of $a$ and $b$. Then $a=C(a) c$ and $b=C(b)c $. 
            By Lemma \ref{th201}, $a^*ra=a^*rc$, $b^*rb=b^*rc$ for all $r \in R$. Also, $b^*rb=c^*rb$ for all $r \in R$.\\
            $(1)$: Since $a=C(a) c$ and $b=C(b)c $, we have $aC(b)=C(a) c C(b)=b C(a)$. \\
          $(2)$: Let $r\in R$. Then $a^*rb=a^*rC(b)c=C(b)a^*rc=C(b)a^*ra$.   
            Also, $a^*rb=(C(a)c)^*rb=C(a)c^*rb=C(a)b^*rb$. 
            Consequently, $a^*rb=C(a)b^*rb=C(b) a^*ra$ for all $r \in R$. 
            In particular for $r=1$, we have $a^*b=C(b)a^*a$. Therefore $(a^*b)^*=C(b)a^*a=a^*b$. Thus $a^*b$ is self adjoint.  \\
          $(3)$: The proof is similar to the proof of part $(1)$.\\
          $(4)$: To prove $a\wedge b = a C(b)$, first we prove that $a C(b)$ is a common lower bound of $a$ and $b$. By Remark \ref{rm101}, $C(aC(b)) a=C(a)C(b)a=a C(b)$. This implies $a C(b) \leq a$. Similarly, $bC(a) \leq b$.
          By part $(1)$, we get $aC(b) \leq b$.
          Let $d \in R$ be such that $d \leq a$ and $d \leq b$. Then $d=C(d)a=C(d)b$ and hence $d C(b)=C(d) b$. Further, $C(d) a C(b)=d C(b)=C(d)b=d$. Therefore $d \leq a C(b)$. Thus $a \wedge b =a C(b)=b C(a)$.  \\
          $(5)$: By $(1)$ and $(4)$, $C(a) (a+b - a\wedge b) = C(a) (a + b - a C(b))= a C(a) + b C(a) - a C(a) C(b)=
          a+bC(a)-a C(b) =a$. This yields $a \leq (a+b - a \wedge b)$. Similarly, $b \leq (a+b - a \wedge b)$.
          Let $d \in R$ be such that $a \leq d$ and $b \leq d$. Then $a=C(a) d$ and $ b = C(b) d$.
         Let $r\in R$.  By using part $(2)$, $(a+b-a \wedge b)^*r (a+b-a\wedge b)=(a^*+b^*-a^*C(b)) r (a+b-aC(b))
         =a^*ra+a^*rb-a^*raC(b)+b^*ra+b^*rb-b^*raC(b)-a^*raC(b)-a^*rbC(b)+a^*raC(b)
         =a^*ra + a^*rb - a^*rb + b^*ra + b^*rb - C(b)b^*ra - a^*rb - a^*raC(b) + a^*raC(b)
         =a^*ra + b^*ra + b^*rb - b^*ra - a^*rb =a^*rdC(a) + b^*rdC(b) - a^*rdC(b)
         =a^*rd + b^*rd - a^*rd C(b) = (a^*+b^*-a^*C(b))rd=(a+b-aC(b))^*rd=(a+b- a\wedge b)^*rd$.
         Thus, by Lemma \ref{th201}, $(a+b- a\wedge b) \leq d$.
         Therefore $a\vee b = a+b - a\wedge b$.         
    \end{proof}
    As an immediate consequence of above theorem we have following corollaries.
    \begin{corollary} Let $R$ be a p.q.-Baer $*$-ring. Then $R$ is a pseudo lattice with respect to Conrad's partial order.    
    \end{corollary}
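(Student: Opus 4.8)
The plan is to read the statement as a direct unpacking of the definition of a pseudo lattice against Theorem \ref{th205}. Recall that a poset $P$ is a pseudo lattice precisely when, for every pair $a, b \in P$ admitting a common upper bound, both $a \wedge b$ and $a \vee b$ exist. Thus the only thing to verify is that this existence condition holds for an arbitrary p.q.-Baer $*$-ring $R$ equipped with Conrad's partial order, and there is nothing further to establish about the order-theoretic structure.

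First I would fix $a, b \in R$ and assume that they possess a common upper bound, i.e. there is $c \in R$ with $a \leq c$ and $b \leq c$. This is exactly the standing hypothesis of Theorem \ref{th205}. I would then invoke parts $(4)$ and $(5)$ of that theorem: part $(4)$ exhibits $a \wedge b = aC(b) = bC(a)$ as the infimum of $a$ and $b$, and part $(5)$ exhibits $a \vee b = a + b - a\wedge b$ as the supremum. Both expressions are concrete elements of $R$, and Theorem \ref{th205} has already verified that they are, respectively, the greatest lower bound and the least upper bound of $\{a,b\}$. Hence both the meet and the join exist whenever a common upper bound is present.

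Since $a$ and $b$ were arbitrary, this shows that $R$ satisfies the defining condition of a pseudo lattice, which completes the argument. I expect essentially no obstacle here: all the genuine content, namely the construction of the candidate elements $aC(b)$ and $a + b - a\wedge b$ together with the verification of their universal properties, has already been discharged in Theorem \ref{th205}. The corollary is therefore a one-line deduction, and the only care required is to match the phrase \emph{whenever a common upper bound exists} in the pseudo lattice definition with the hypothesis of Theorem \ref{th205}.
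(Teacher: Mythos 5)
Your proposal is correct and matches the paper exactly: the paper presents this corollary as an immediate consequence of Theorem \ref{th205}, with parts $(4)$ and $(5)$ supplying the meet $a \wedge b = aC(b) = bC(a)$ and the join $a \vee b = a + b - a \wedge b$ under the common-upper-bound hypothesis, which is precisely your argument.
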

        \begin{corollary} Let $R$ be a p.q.-Baer $*$-ring and $a, b \in R$. If $a\vee b$ exists in $R$ then 
        $a \vee b =a+b(1-C(a)) = b+a(1 -C(b))$.        
        \end{corollary}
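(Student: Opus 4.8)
The plan is to reduce everything to parts (4) and (5) of Theorem \ref{th205}. The first observation is that the very existence of $a \vee b = \sup\{a,b\}$ forces $a$ and $b$ to possess a common upper bound, namely $a \vee b$ itself; hence the hypothesis of Theorem \ref{th205} is met and all five of its conclusions are at our disposal.

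Next I would invoke part (5) to write $a \vee b = a + b - a \wedge b$, and then substitute the two expressions for the meet furnished by part (4), namely $a \wedge b = a C(b) = b C(a)$. Using $a \wedge b = b C(a)$ gives $a \vee b = a + b - b C(a) = a + b(1 - C(a))$, while using $a \wedge b = a C(b)$ gives $a \vee b = a + b - a C(b) = b + a(1 - C(b))$. Comparing the two presentations yields the claimed double equality.

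There is really no hard step here: once the existence of the join is seen to guarantee a common upper bound, the result is a one-line algebraic rearrangement of the meet--join identity of Theorem \ref{th205}. The only point requiring a moment's care is the logical one, that $a \vee b$ existing is enough to trigger Theorem \ref{th205}, since that theorem is stated under the hypothesis that $a$ and $b$ merely have a common upper bound rather than that their supremum exists; but as noted above a supremum is in particular an upper bound, so the implication is immediate and the corollary follows at once.
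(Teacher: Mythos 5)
Your proposal is correct and follows exactly the route the paper intends: the paper states this corollary as an immediate consequence of Theorem \ref{th205}, and your argument---that the existence of $a \vee b$ supplies the required common upper bound, after which parts (4) and (5) give $a \vee b = a + b - bC(a) = a + b(1-C(a))$ and $a \vee b = a + b - aC(b) = b + a(1-C(b))$---is precisely that intended deduction. Nothing is missing; your explicit note that a supremum is in particular an upper bound is the one logical point the paper leaves implicit.
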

       By Theorem \ref{th205}(1), in a p.q.-Baer $*$-ring $R$, if $a, b \in R$ have a common upper bound then $aC(b)=bC(a)$. 
        In the following lemma, we prove that the converse of this statement is also true.
         \begin{lemma} \label{nlm401} Let $R$ be a p.q.-Baer $*$-ring and $a, b \in R$. 
          If $a C(b)= b C(a)$ then $a, b$ have a common upper bound.          
         \end{lemma}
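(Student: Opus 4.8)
The plan is to exhibit an explicit common upper bound, guided by the join formula in Theorem \ref{th205}(5). Since that theorem expresses the supremum of two elements having a common upper bound as $a \vee b = a + b - a\wedge b$ with $a\wedge b = aC(b) = bC(a)$, the natural candidate here is $c = a + b - aC(b)$. Because we are assuming $aC(b) = bC(a)$, this same element can be written as $c = a + b - bC(a)$, which restores the symmetry between $a$ and $b$.

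First I would recall, via Lemma \ref{th201}, that $a \leq c$ is equivalent to $a = C(a)c$, so that proving $c$ is a common upper bound reduces to verifying the two identities $C(a)c = a$ and $C(b)c = b$. These are short computations using only that $C(a)$ and $C(b)$ are central projections — so $C(a)a = a$, $C(b)b = b$, $C(b)^2 = C(b)$, and everything commutes with them — together with the single hypothesis $aC(b) = bC(a)$.

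For $a \leq c$ I would expand $C(a)c = C(a)a + C(a)b - C(a)\,aC(b)$. The first and third terms give $C(a)a = a$ and $C(a)aC(b) = aC(b)$, while the middle term rewrites, by centrality and then the hypothesis, as $C(a)b = bC(a) = aC(b)$; hence $C(a)c = a + aC(b) - aC(b) = a$. For $b \leq c$ I would similarly expand $C(b)c = C(b)a + C(b)b - C(b)\,aC(b)$, where $C(b)a = aC(b)$ by centrality, $C(b)b = b$, and $C(b)aC(b) = aC(b)$ by idempotence of $C(b)$, so that $C(b)c = aC(b) + b - aC(b) = b$. With both identities in hand, Lemma \ref{th201} gives $a \leq c$ and $b \leq c$, so $c$ is the desired common upper bound.

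There is no serious obstacle: the only genuine step is guessing the candidate $c$, and once $c = a + b - aC(b)$ is written down the verification is forced by the centrality of the covers. The point worth watching is where the hypothesis enters. The inequality $b \leq c$ follows from centrality and idempotence of $C(b)$ alone, whereas $aC(b) = bC(a)$ is exactly what is needed for $a \leq c$, through the identity $C(a)b = bC(a) = aC(b)$; dropping the hypothesis would break this cancellation and $c$ would fail to dominate $a$. This confirms that the condition $aC(b) = bC(a)$ is not only necessary (Theorem \ref{th205}(1)) but also sufficient for the existence of a common upper bound.
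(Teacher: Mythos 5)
Your proposal is correct and follows essentially the same route as the paper: the paper also takes $c = a+b-aC(b)$ as the explicit candidate and verifies $C(a)c = a$ and $C(b)c = b$ using centrality of the covers and the hypothesis $aC(b)=bC(a)$. Your additional remark pinpointing that the hypothesis is needed only for $a \leq c$ is accurate but does not change the argument.
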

        \begin{proof} Let $a, b \in R$ be such that $aC(b)=bC(a)$. We prove that $a+b - a C(b)$ is a common upper bound of $a$ and $b$.        
               We have $C(a)(a+b-a C(b))=a+C(a)b - a C(b)=a$. Also,
               $C(b)(a+b-aC(b))=a C(b) +  b - a C(b)=b$. Therefore $a \leq a+b - aC(b)$ and $b \leq a+b - aC(b)$, as required.
        \end{proof}
    \begin{corollary} Let $R$ be a p.q.-Baer $*$-ring and $a, b \in R$. Then, $a C(b)= b C(a)$ if and only if $a \vee b =a+b- a \wedge b$    
    \end{corollary}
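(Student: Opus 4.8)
The plan is to prove both implications directly from the two results immediately preceding the statement, namely Theorem~\ref{th205} and Lemma~\ref{nlm401}, so that the corollary is essentially the synthesis of those two facts.

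For the forward direction, I would assume $aC(b)=bC(a)$ and invoke Lemma~\ref{nlm401} to conclude that $a$ and $b$ have a common upper bound. Since $R$ is a pseudo lattice under Conrad's order (by the corollary following Theorem~\ref{th205}), the existence of a common upper bound guarantees that both $a\wedge b$ and $a\vee b$ exist. Then I would apply Theorem~\ref{th205}(5), which states precisely that $a\vee b = a+b-a\wedge b$ whenever $a,b$ admit a common upper bound. This closes the forward implication with no separate computation required.

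For the converse, I would start from the hypothesis $a\vee b = a+b-a\wedge b$. The key observation is that the very act of writing this identity presupposes that $a\vee b$ exists in $R$; and if $a\vee b$ exists, then in particular $a\vee b$ is itself a common upper bound of $a$ and $b$. Having secured a common upper bound, I would apply Theorem~\ref{th205}(1), which asserts $aC(b)=bC(a)$ under exactly that hypothesis. This yields the desired equality and completes the proof.

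I do not anticipate a genuine obstacle here, since both directions reduce to citing established results. The only point demanding a little care is the interpretation of the right-hand identity in the converse: I would make explicit that the statement $a\vee b=a+b-a\wedge b$ is to be read as asserting the existence of the join (and meet), so that the passage to ``a common upper bound exists'' is justified rather than merely formal. Once that reading is fixed, the application of Theorem~\ref{th205}(1) is immediate.
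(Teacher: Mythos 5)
Your proposal is correct and follows exactly the route the paper intends: the corollary is stated without proof immediately after Lemma~\ref{nlm401}, precisely because it is the synthesis of that lemma (for the forward direction, via a common upper bound and Theorem~\ref{th205}(4)--(5)) with Theorem~\ref{th205}(1) (for the converse, since an existing join is itself a common upper bound). Your explicit remark that the identity $a \vee b = a+b-a\wedge b$ must be read as asserting existence of the join is the right reading and is the only subtlety in the argument.
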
    
        
        The following theorem, characterises p.q.-Baer $*$-rings which form lattices with Conrad's partial order.
        \begin{theorem} Let $R$ be a p.q.-Baer $*$-ring. 
        Then $R$ is a lattice with respect to Conrad's partial order if and only if $aC(b)=bC(a)$ for all $a, b \in R$.        
        \end{theorem}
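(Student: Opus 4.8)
The plan is to prove the biconditional by recognizing that it follows almost directly from the machinery already in place, chiefly Theorem \ref{th205} (which establishes that $R$ is a pseudo lattice and supplies the identity $aC(b)=bC(a)$ for any pair admitting a common upper bound) together with Lemma \ref{nlm401} (which supplies the converse implication). The organizing observation is that, in a pseudo lattice, the single global condition ``every pair of elements has a common upper bound'' is exactly what promotes the structure to a genuine lattice; so the entire argument amounts to converting the algebraic identity $aC(b)=bC(a)$ into the statement that common upper bounds always exist, and back.

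For the forward direction, I would assume $R$ is a lattice. Then for any $a, b \in R$ the join $a \vee b$ exists and is in particular a common upper bound of $a$ and $b$. Applying Theorem \ref{th205}(1) to this pair immediately yields $aC(b)=bC(a)$, and since $a, b$ were arbitrary the identity holds throughout $R$.

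For the converse, I would assume $aC(b)=bC(a)$ for all $a, b \in R$ and fix an arbitrary pair $a, b$. By Lemma \ref{nlm401}, this hypothesis guarantees that $a$ and $b$ possess a common upper bound. Because $R$ is a pseudo lattice (Theorem \ref{th205}, parts (4) and (5)), the mere existence of a common upper bound forces both $a \wedge b$ and $a \vee b$ to exist. As the pair was arbitrary, every pair in $R$ has a meet and a join, so $R$ is a lattice.

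I do not anticipate a genuine obstacle here, since each implication reduces to invoking the appropriate earlier result; the only point needing care is the logical bookkeeping, namely making explicit that a pseudo lattice in which every pair has a common upper bound is automatically a lattice. That bridging remark is precisely what makes the universally-holding identity $aC(b)=bC(a)$ equivalent to the full lattice property, and stating it cleanly is the one place where the proof, though short, should not be left implicit.
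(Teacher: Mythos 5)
Your proposal is correct and follows exactly the paper's route: the paper's own proof simply cites Theorem \ref{th205} together with Lemma \ref{nlm401}, which is precisely the combination you use (Theorem \ref{th205}(1) for the forward direction, Lemma \ref{nlm401} plus Theorem \ref{th205}(4),(5) for the converse). Your explicit bookkeeping of why a pseudo lattice with all pairs bounded above is a lattice is a welcome expansion of what the paper leaves implicit, but it is the same argument.
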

        \begin{proof} The proof follows from Theorem \ref{th205} and Lemma \ref{nlm401}.       
        \end{proof}
        We conclude this section with positive answer to  Problem 2, when $R$ is a p.q.-Baer $*$-ring 
       with Conrad's partial order. 
       \begin{theorem} Let $R$ be a p.q.-Baer $*$-ring and $a, b, c \in R$. 
       If $a\leq c,~b \leq c, ~ aR \cap bR =\{0\}$ then $a+b \leq c$.       
       \end{theorem}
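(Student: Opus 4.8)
The plan is to reduce everything to the behaviour of central covers, exploiting the meet formula from Theorem~\ref{th205}. First I would note that since $a \leq c$ and $b \leq c$, the elements $a$ and $b$ have a common upper bound, so by Theorem~\ref{th205}(4) the meet $a \wedge b$ exists and equals $aC(b) = bC(a)$. The crucial observation is that $aC(b) \in aR$ while $bC(a) \in bR$; since these two expressions are equal, their common value lies in $aR \cap bR = \{0\}$. Hence $a \wedge b = aC(b) = bC(a) = 0$.

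Next I would translate this into a statement about central covers. From $aC(b) = 0$ and the centrality of $C(b)$ one gets $arC(b) = aC(b)r = 0$ for every $r$, so $aRC(b) = 0$, and Theorem~\ref{p1th101} then yields $C(a)C(b) = 0$; that is, $a \perp b$.

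The heart of the argument is to compute $C(a+b)$, and I claim $C(a+b) = C(a) + C(b)$. Since $C(a)C(b) = 0$, the sum $C(a)+C(b)$ is a central projection, and using $aC(b) = bC(a) = 0$ one checks directly that $(a+b)(C(a)+C(b)) = a+b$. To confirm it is the central cover I would verify the annihilator condition of Lemma~\ref{lm201}: if $(a+b)Ry = 0$, then multiplying by the central projections $C(a)$ and $C(b)$ and using $C(a)(a+b) = a$ and $C(b)(a+b) = b$ (both consequences of $aC(b)=bC(a)=0$) gives $aRy = 0$ and $bRy = 0$, whence $C(a)y = C(b)y = 0$ by Theorem~\ref{p1th101}, so $(C(a)+C(b))y = 0$. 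Lemma~\ref{lm201} then delivers $C(a+b) = C(a)+C(b)$.

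Finally, since $a \leq c$ and $b \leq c$ mean $a = C(a)c$ and $b = C(b)c$, I obtain $C(a+b)\,c = (C(a)+C(b))c = C(a)c + C(b)c = a+b$, which is exactly the relation $a+b \leq c$ in the central-cover form supplied by Lemma~\ref{th201}. The step I expect to be the main obstacle is the very first one---recognising that the hypothesis $aR \cap bR = \{0\}$ forces the meet to vanish and thereby forces orthogonality $C(a)C(b)=0$; once orthogonality is available, the additivity $C(a+b) = C(a)+C(b)$ and the conclusion follow mechanically. It is worth remarking that only $aR \cap bR = \{0\}$ is used in this route, so the extra condition $Ra \cap Rb = \{0\}$ appearing in Problem~2 turns out to be superfluous here.
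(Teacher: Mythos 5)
Your proof is correct, and it opens exactly as the paper does: both arguments observe that $a,b$ have the common upper bound $c$, invoke Theorem~\ref{th205}(1)/(4) to get $aC(b)=bC(a)$, and note that this common value lies in $aR\cap bR=\{0\}$, hence vanishes. Where you diverge is the finish. The paper simply applies Theorem~\ref{th205}(5): since $a\wedge b=aC(b)=0$, the join is $a\vee b=a+b$, and because $a\vee b$ is by definition the \emph{least} upper bound while $c$ is an upper bound, $a+b\leq c$ follows in one line. You instead re-derive the underlying machinery: you upgrade $aC(b)=0$ to orthogonality $C(a)C(b)=0$ via Theorem~\ref{p1th101}, prove the central-cover additivity $C(a+b)=C(a)+C(b)$ by verifying both conditions of Lemma~\ref{lm201}, and then check $C(a+b)\,c=a+b$ directly against the characterization in Lemma~\ref{th201}. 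Your computations are all sound (in particular $C(a)(a+b)=a$ and $C(b)(a+b)=b$ do follow from $C(a)C(b)=0$), so this is a valid, more self-contained route that avoids leaning on the supremum property of $\vee$ encoded in Theorem~\ref{th205}(5); the cost is that you essentially reprove, in the special orthogonal case, work the paper has already done (indeed, your additivity claim $C(a+b)=C(a)+C(b)$ appears verbatim as a lemma that was cut from the paper's source). Your closing remark that only $aR\cap bR=\{0\}$ is needed, not $Ra\cap Rb=\{0\}$, matches the paper's formulation of the theorem.
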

       \begin{proof} Let $a,b, c \in R, a\leq c,~b \leq c$ and $aR\cap bR=\{0\}$. Then, by Theorem \ref{th205}, $aC(b)=b C(a)$.
       This implies $aC(b)\in aR\cap bR$ and hence $aC(b)=0$.    Again, by using Theorem \ref{th205}, $a\vee b = a+b$. Thus $a+b \leq c$.       
       \end{proof}
            
         \section{Orthogonality Relation on p.q.-Baer $*$-Rings}     
       In this section, we prove that the initial segments of an arbitrary p.q.-Baer $*$-rings with Conrad's partial order are orthomodular posets. \\
     \indent  We recall the following definitions from \cite{Cir}.  \\      
      A binary relation $\perp$ on a poset $(P, \leq, 0)$, where $0$ is the least element of the poset,
    is called an {\it orthogonality relation} (for the order $\leq$) if for all $x, y, z \in P$,
    \begin{enumerate}
     \item  if $x \perp y$, then $y \perp x$; 
     \item if $x \leq y$ and $y \perp z$, then $x \perp z$; and
    \item $0 \perp x$. \\    
          A poset with orthogonality $(P,~\leq,~\perp,~0)$ is called {\it quasi-orthomodular} if for all $x, y\in P$,
    \item if $x \perp y$, then $x \vee y$ exists;
    \item if $x \leq y$, then $y=x \vee z$ for some $z\in P$ with $x \perp z$;
   \item if $x \perp y$, $x \perp z$ and $y \leq x \vee z$, then $y \leq z$.     
     \end{enumerate} 
 A poset $(P, ~\leq,~ 0,~ 1)$ (where $0$ is the least and $1$ is the greatest element) is called an
  {\it orthocomplemented poset} if there is an operation $^{\perp}: P \rightarrow P$ such that for all $a, b \in P$, 
  \begin{enumerate}
   \item $a \wedge a^{\perp}$ and $ a \vee a^{\perp}$ exist, and $a\wedge a^{\perp}=0$ and $a \vee a^{\perp}=1$;
    \item$(a^{\perp})^{\perp}=a$;
    \item if $a \leq b$, then $b^{\perp}\leq a^{\perp}$.
 \end{enumerate}
      The operation $^{\perp}$ is called an orthocomplementation. In an orthocomplemented poset, 
      we define the relation $\perp$ by $a \perp b$ if and only if $a \leq b^{\perp}$. This is an orthogonality relation.      
      An orthocomplemented poset $(P,~\leq,~^\perp,~0,~1)$ is called orthomodular if for all $a, b\in P$,
      \begin{enumerate}
       \item if $a \perp b$, then $a \vee b$ exist;
       \item if $a\leq b$, then there exists an element $c\in P$ such that $c \leq a^{\perp}$ and $b=a \vee c$.
      \end{enumerate}
    Between orthomodularity and quasi-orthomodularity, the following connection holds.
    \begin{theorem}[\cite{Cir}] \label{th202}  In a quasi-orthomodular poset $(P,~\leq,~^\perp)$, all initial segments 
    $[0,p]=\{a\in  P~|~a \leq p  \}$ are orthomodular for some orthogonality $\perp_p$ on $([0,p], \leq)$.
    Furthermore, if $\perp_p$ is the orthogonality of the initial segment $[0,p]$, then for all $a, b \in [0,p]$,
    $a\perp_p b$ if and only if $a \perp b$. Moreover, if $x \perp_p y$ and $x, y \leq q$, then $x\perp_q y$.     
    \end{theorem}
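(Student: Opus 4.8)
The plan is to construct, for each $p \in P$, an orthocomplementation on the initial segment $[0,p]$ directly from the quasi-orthomodularity axioms, and then to check that the orthogonality it induces coincides with the restriction of $\perp$. First I would define, for $a \in [0,p]$, a relative complement $a^{\perp_p}$ to be an element $z$ with $a \perp z$ and $a \vee z = p$; existence is exactly the condition ``$x\le y \Rightarrow y = x\vee z$ with $x\perp z$'' applied to $a \leq p$, and since $z \leq a \vee z = p$ one automatically has $z \in [0,p]$. The first nontrivial point is uniqueness: if $z_1,z_2$ both qualify, then from $a \perp z_1$, $a \perp z_2$ and $z_2 \leq p = a \vee z_1$ the cancellation condition ``$x\perp y$, $x\perp z$, $y\le x\vee z \Rightarrow y\le z$'' gives $z_2 \leq z_1$, and symmetrically $z_1 \leq z_2$, so $z_1 = z_2$. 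This makes $a \mapsto a^{\perp_p}$ a well-defined map on $[0,p]$.

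Next I would verify the three axioms of an orthocomplementation. The identity $a \vee a^{\perp_p} = p$ holds by construction, and the involution law $(a^{\perp_p})^{\perp_p} = a$ follows immediately from the symmetry of $\perp$ together with the uniqueness just established, since $a$ itself is a complement of $a^{\perp_p}$. Order-reversal comes from the cancellation condition again: if $a \leq b$ then $a \perp b^{\perp_p}$ (by the axiom ``$x\le y$, $y\perp z \Rightarrow x\perp z$'' applied to $b \perp b^{\perp_p}$), and since $b^{\perp_p} \leq p = a \vee a^{\perp_p}$ with $a \perp a^{\perp_p}$, we get $b^{\perp_p} \leq a^{\perp_p}$. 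The remaining and most delicate requirement is $a \wedge a^{\perp_p} = 0$. Here the key trick is that a common lower bound $d \leq a$, $d \leq a^{\perp_p}$ forces $d \perp d$: one application of ``$x\le y$, $y\perp z\Rightarrow x\perp z$'' (via $d\le a$, $a\perp a^{\perp_p}$) gives $d \perp a^{\perp_p}$, and a second application (via $d \leq a^{\perp_p}$) gives $d \perp d$. Then the cancellation condition with the triple $(x,y,z)=(d,d,0)$ — using $0 \perp d$ and $d \leq d \vee 0 = d$ — collapses to $d \leq 0$, i.e. $d = 0$. I expect this reflexivity-annihilates step to be the main obstacle, since it is the one place where all three quasi-orthomodularity conditions must be played off against the least element.

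Finally I would identify the induced orthogonality $\perp_p$, given by $a \perp_p b \iff a \leq b^{\perp_p}$, with the restriction of $\perp$. For $a \perp b \Rightarrow a \perp_p b$ I apply the cancellation condition to $(b,a,b^{\perp_p})$ to obtain $a \leq b^{\perp_p}$; the converse is immediate from ``$x\le y$, $y\perp z\Rightarrow x\perp z$'' applied to $b^{\perp_p} \perp b$. With this equivalence the two orthomodularity clauses are quick: if $a \perp_p b$ then $a \perp b$, so $a \vee b$ exists by the axiom ``$x\perp y \Rightarrow x\vee y$ exists'' and lies in $[0,p]$ because $p$ is a common upper bound; and if $a \leq b$ then condition ``$x\le y \Rightarrow y = x\vee z$ with $x\perp z$'' supplies $b = a \vee z$ with $a \perp z$, whence $z \leq a^{\perp_p}$ by the equivalence and the symmetry $a\le z^{\perp_p}\iff z\le a^{\perp_p}$ valid in any orthocomplemented poset. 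The closing ``Moreover'' clause is then formal: both $\perp_p$ and $\perp_q$ agree with $\perp$ on their overlap, so $x \perp_p y$ together with $x,y \leq q$ yields $x \perp_q y$ at once.
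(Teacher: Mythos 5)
Your proposal is correct, but there is nothing in the paper to compare it against: Theorem \ref{th202} is quoted from C\={\i}rulis \cite{Cir} and the paper supplies no proof of it, only the citation (its role in the paper is purely as an imported tool, combined with Theorem \ref{th204} to get orthomodularity of the segments $[0,m]$ in a p.q.-Baer $*$-ring). Your argument is therefore a genuinely self-contained derivation, and it checks out against the axioms as the paper states them. The two delicate points are handled properly: uniqueness of the relative complement follows from the cancellation axiom applied in both directions, which is what makes $a \mapsto a^{\perp_p}$ a well-defined involution on $[0,p]$; and the step $a \wedge a^{\perp_p}=0$ via ``a common lower bound $d$ satisfies $d \perp d$, and $d \perp d$ together with $d \perp 0$ and $d \leq d \vee 0$ forces $d \leq 0$'' is exactly the right way to exploit the least element --- this is the one place where all three quasi-orthomodularity conditions and the orthogonality axioms interact, and your instantiation $(x,y,z)=(d,d,0)$ of the cancellation law is valid since $0 \perp d$ gives $d \perp 0$ by symmetry. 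The identification of the induced relation $a \leq b^{\perp_p}$ with the restriction of $\perp$, the transfer of joins from $P$ into $[0,p]$ (the $P$-join lies below $p$, hence is also the join computed inside the segment), and the ``Moreover'' clause all follow as you say. What your approach buys is independence from \cite{Cir}: the paper's reader must trust an external reference, whereas your proof makes the section self-contained; the cost is about a page of poset bookkeeping that the paper avoids by citation.
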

    By using above theorem, we prove that the initial segments of  p.q.-Baer $*$-rings with Conrad's partial order are orthomodular posets, for that we prove the following sequence of theorems and lemmas.    
    \begin{theorem} The relation $\perp$ is an orthogonality relation on a p.q.-Baer $*$-ring.     
    \end{theorem}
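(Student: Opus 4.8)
The plan is to verify the three defining axioms of an orthogonality relation from the definition recalled just before the statement, using the characterization $a \perp b \iff C(a)C(b)=0$ that was established in the excerpt. Since this characterization reduces everything to the behaviour of central covers, which are central projections, the proof should be a short and clean computation in the commutative Boolean-algebra-like structure $P(Z(R))$.

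First I would treat symmetry (axiom 1): if $a \perp b$, then $C(a)C(b)=0$, and since central projections commute this is the same as $C(b)C(a)=0$, giving $b \perp a$ immediately. Next, for axiom 3, I would note that $0 \perp x$ amounts to $C(0)C(x)=0$; since $C(0)=0$ (the zero element has central cover $0$, as follows from Remark~\ref{rm101}), this holds for every $x$.

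The main step is axiom 2: assuming $x \leq y$ and $y \perp z$, I must deduce $x \perp z$. Here I would invoke Lemma~\ref{lm202}(1), which gives that $x \leq y$ implies $C(x) \leq C(y)$, i.e. $C(x) = C(x)C(y)$ in $P(Z(R))$. Then from $y \perp z$, that is $C(y)C(z)=0$, I compute $C(x)C(z) = C(x)C(y)C(z) = C(x)\cdot 0 = 0$, so $x \perp z$. This is the one place where the partial order interacts with orthogonality, and it is exactly where the monotonicity of the central cover does the work.

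I expect no genuine obstacle here: once $\perp$ is translated into the language of central covers via the cited equivalence, all three axioms collapse to one-line identities among commuting central projections, the only inputs being $C(0)=0$ and the monotonicity $C(x)\leq C(y)$ from Lemma~\ref{lm202}. The only point requiring a little care is to make sure the ground poset is $(P,\leq,0)$ with $0$ the least element, so that the statement "$0 \perp x$" is meaningful; this is fine since $0 \leq x$ for all $x$ under Conrad's order (as $0 = C(0)x$ trivially), making $0$ the least element of the p.q.-Baer $*$-ring.
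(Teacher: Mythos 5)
Your proof is correct and follows essentially the same route as the paper's: symmetry from commutativity of central projections, the implication $x\leq y,\ y\perp z \Rightarrow x\perp z$ via Lemma \ref{lm202}(1) (monotonicity $C(x)=C(x)C(y)$), and $0\perp x$ from $C(0)=0$. Your added observation that $0$ is the least element under Conrad's order (so that the axiom $0\perp x$ is meaningful) is a small but welcome point of care that the paper leaves implicit.
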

    \begin{proof} Let $R$ be a p.q.-Baer $*$-ring. By definition of orthogonal elements, it is clear that for any $x, y \in R$, if $x \perp y$ then $y \perp x$.
    Suppose $a \leq b$ and $b \perp c$. Then $a=C(a) b$ and $C(b)C(c)=0$.
    By Lemma \ref{lm202},  $C(a)C(c)=C(a)C(b)C(c)=0$ and hence $a \perp c$.
    Further, $C(0)=0$, therefore $C(0)C(x)=0$ for any $x \in R$. Consequently, $0 \perp x$ for any $x \in R$.
    Thus the relation $\perp$ is an orthogonality relation. 
    \end{proof}
     \begin{lemma} \label{lm203'} Let $R$ be a p.q.-Baer $*$-ring. If $a$ and $b$ are orthogonal elements of $R$, then $a$ and $b$ have a common upper bound.     
        \end{lemma}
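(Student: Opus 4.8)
The plan is to reduce the statement to Lemma \ref{nlm401}, which already guarantees a common upper bound once the equality $aC(b)=bC(a)$ is known. The main work, then, is to show that orthogonality forces both cross terms $aC(b)$ and $bC(a)$ to vanish. Recall that in a p.q.-Baer $*$-ring the hypothesis $a\perp b$ is, by definition, equivalent to $C(a)C(b)=0$.

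The key observation is that the defining property of the central cover, namely $C(a)a=a$ (and $C(b)b=b$), lets orthogonality annihilate these cross terms. Concretely, I would compute $aC(b)=aC(a)C(b)=a\cdot 0=0$, using $a=aC(a)$ together with $C(a)C(b)=0$, and symmetrically $bC(a)=bC(b)C(a)=0$. Hence $aC(b)=0=bC(a)$, so in particular $aC(b)=bC(a)$, and Lemma \ref{nlm401} immediately supplies a common upper bound for $a$ and $b$.

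If an explicit witness is preferred over citing Lemma \ref{nlm401}, I would verify directly that $a+b$ is a common upper bound. Since $a\leq x$ means $a=C(a)x$, I would check $C(a)(a+b)=C(a)a+C(a)b=a+0=a$, where $C(a)b=C(a)C(b)b=0$; the computation for $b\leq a+b$ is symmetric. This is exactly the upper bound $a+b-aC(b)=a+b$ produced in the proof of Lemma \ref{nlm401}, specialized to the orthogonal case. As every step is a direct consequence of the central cover identities in Remark \ref{rm101} and Theorem \ref{p1th101}, I do not anticipate any genuine obstacle; the only point demanding care is using the correct characterization of orthogonality, $C(a)C(b)=0$, together with the idempotent-like identity $aC(a)=a$ for the central cover.
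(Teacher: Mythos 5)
Your proposal is correct and matches the paper's proof essentially verbatim: the paper likewise observes that $a\perp b$ gives $C(a)C(b)=0$, hence $aC(b)=C(a)b=0$, and then invokes Lemma \ref{nlm401} to obtain the common upper bound. Your optional direct verification that $a+b$ is a common upper bound is a harmless elaboration of the same argument.
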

    \begin{proof} Let $a,b \in R$ be such that $a \perp b$. Then $C(a) C(b)=0$. This implies $aC(b)=C(a)b=0$. Therefore by Lemma \ref{nlm401}, $a$ and $b$ have a common upper bound.
      \end{proof}   
     In the following theorem, we prove that orthogonal elements of a p.q.-Baer $*$-ring possess the join and the meet.
   \begin{theorem}  \label{th203} Let $R$ be a p.q.-Baer $*$-ring and $a, b \in R$ be orthogonal elements. 
   Then $ a\wedge b$, $ a \vee b $ exist and $a\wedge b =0$, $a \vee b= a+b$.    
   \end{theorem}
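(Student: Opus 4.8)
The plan is to reduce everything to the machinery already assembled in Theorem \ref{th205}. Since $a \perp b$, Lemma \ref{lm203'} guarantees that $a$ and $b$ have a common upper bound, so Theorem \ref{th205} applies directly and both $a \wedge b$ and $a \vee b$ exist. It then remains only to identify them explicitly.

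First I would unpack the orthogonality hypothesis. By the definition recorded in the excerpt, $a \perp b$ is equivalent to $C(a)C(b) = 0$. Using the defining property of the central cover that $a = aC(a)$ and $b = bC(b)$ (Remark \ref{rm101}), together with the fact that central covers commute with everything, I would compute $aC(b) = aC(a)C(b) = 0$ and, symmetrically, $bC(a) = bC(b)C(a) = 0$. This is the key elementary step: orthogonality of the covers actually forces the mixed products $aC(b)$ and $bC(a)$ themselves to vanish, not merely $C(a)C(b)$.

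With these vanishings in hand the two conclusions are immediate. Part (4) of Theorem \ref{th205} gives $a \wedge b = aC(b)$, which by the previous step equals $0$. Part (5) of the same theorem then gives $a \vee b = a + b - a \wedge b = a + b$, completing the identification.

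I expect no genuine obstacle here, since the substantive content was already established upstream: Lemma \ref{lm203'} supplies the common upper bound that activates Theorem \ref{th205}, and Theorem \ref{th205} supplies the closed-form expressions $a \wedge b = aC(b)$ and $a \vee b = a + b - a \wedge b$. The only point requiring care is the observation that $C(a)C(b) = 0$ collapses the meet formula all the way to $0$; everything else is bookkeeping against the earlier results.
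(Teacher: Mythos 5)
Your proposal is correct and follows essentially the same route as the paper: invoke Lemma \ref{lm203'} to get a common upper bound, apply Theorem \ref{th205} to obtain $a \wedge b = aC(b)$ and $a \vee b = a+b-a\wedge b$, and use $C(a)C(b)=0$ to collapse $aC(b)$ to $0$. The only difference is that you spell out the computation $aC(b)=aC(a)C(b)=0$ explicitly, which the paper leaves implicit (it appears in the proof of Lemma \ref{lm203'}).
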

   \begin{proof} Let $a, b \in R$ be orthogonal elements. Then by Lemma \ref{lm203'}, $a$ and $b$ have a common upper bound.
   By Theorem \ref{th205}, $a\wedge b =a C(b)$ and $a \vee b= a+b- aC(b)$. Since $a$ and $b$ are orthogonal elements, we have $aC(b)=0$. Therefore $a\wedge b =0$ and $a \vee b= a+b$.     
   \end{proof}
   The following lemma leads to the orthomodularity condition in a poset.
     \begin{lemma} \label{lm204}  Let $R$ be a p.q.-Baer $*$-ring and $a,b \in R$. If $a \leq b$ then there exists $c\in R$ such that
       $a \perp c$ and $b=a+c$.       
     \end{lemma}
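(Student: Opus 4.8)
The plan is to take the obvious candidate $c = b-a$, so that the equation $b = a+c$ holds trivially, and then to verify the orthogonality $a \perp c$ using the machinery already built up in Section 2. Recall that in a p.q.-Baer $*$-ring the orthogonality $a \perp c$ is equivalent to the central-cover condition $C(a)C(c)=0$, so the entire task reduces to a computation with central projections.

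First I would observe that the hypothesis $a \leq b$ puts both parts of Lemma \ref{lm202} at my disposal. Part $(2)$ gives $C(c) = C(b-a) = C(b) - C(a)$, and part $(1)$ gives $C(a) \leq C(b)$. By Remark \ref{rm101}$(1)$, the latter means precisely $C(a) = C(a)C(b)$, since $C(a)$ and $C(b)$ are central projections.

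With these two facts in hand, the orthogonality is immediate:
\begin{equation*}
C(a)C(c) = C(a)\bigl(C(b) - C(a)\bigr) = C(a)C(b) - C(a) = C(a) - C(a) = 0,
\end{equation*}
using $C(a)^2 = C(a)$ and $C(a)C(b) = C(a)$. Hence $C(a)C(c)=0$, which by the definition of the orthogonality relation on a p.q.-Baer $*$-ring means exactly $a \perp c$.

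There is no real obstacle here, as the heavy lifting was done in establishing $C(b-a) = C(b) - C(a)$ and $C(a) \le C(b)$ in Lemma \ref{lm202}; the only thing to be careful about is to invoke the correct characterization of $\perp$ in terms of central covers rather than working directly with the annihilator condition $aRc=0$. The conclusion $c = b-a$ satisfies both $a \perp c$ and $b = a+c$, which is the orthomodularity-type decomposition required.
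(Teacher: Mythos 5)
Your proposal is correct and follows essentially the same route as the paper's own proof: both take $c=b-a$, invoke Lemma \ref{lm202}(2) to get $C(c)=C(b)-C(a)$, and conclude $C(a)C(c)=C(a)C(b)-C(a)=0$ so that $a\perp c$. The only cosmetic difference is that you obtain $C(a)=C(a)C(b)$ from Lemma \ref{lm202}(1) together with Remark \ref{rm101}(1), whereas the paper derives it directly from $a=C(a)b$; the two are interchangeable.
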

     \begin{proof}  Let $a, b \in R$ and $a \leq b$. Then $a=C(a)b$ and hence $C(a)=C(a)C(b)$. Let $c=b-a$. By Lemma \ref{lm202},  $C(a)C(c)=C(a)C(b-a)=C(a)(C(b)-C(a))=C(a)C(b)-C(a)=0$. 
    Therefore  $a \perp c$.      
     \end{proof}
    \begin{lemma} \label{lm205} Let $R$ be a p.q.-Baer $*$-ring and $a, b, c \in R$. If $a \perp b$, $a \perp c$ and $ b \leq a \vee c$, 
    then $b \leq c$.     
    \end{lemma}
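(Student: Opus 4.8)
The plan is to reduce the whole statement to the central-cover characterization of Conrad's order established in Lemma \ref{th201}, namely that $b \leq c$ holds precisely when $b = C(b)\,c$. Thus the entire task becomes: starting from the three hypotheses, derive the single identity $b = C(b)\,c$. My strategy is to expand the hypothesis $b \leq a \vee c$ using this characterization, and then show that the unwanted summand drops out because of orthogonality.

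First I would make the join explicit. Since $a \perp c$, Theorem \ref{th203} guarantees that $a \vee c$ exists and equals $a + c$. Hence the hypothesis $b \leq a \vee c$ reads $b \leq a + c$, and by the central-cover form of the order this means
\[
b = C(b)(a+c) = C(b)\,a + C(b)\,c.
\]
Next I would kill the term $C(b)\,a$. This is where the other orthogonality relation $a \perp b$ is used: by definition it says $C(a)C(b) = 0$. Using the defining property $C(a)\,a = a$ of the central cover together with the fact that central projections commute, I obtain
\[
C(b)\,a = C(b)\,C(a)\,a = \bigl(C(a)C(b)\bigr)\,a = 0.
\]
Substituting this back yields $b = C(b)\,c$, which by Lemma \ref{th201} is exactly the desired conclusion $b \leq c$.

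The main point to get right is the bookkeeping of which orthogonality hypothesis does what: $a \perp c$ is needed only to turn the join $a \vee c$ into the additive expression $a + c$ so that $C(b)(a+c)$ splits, while $a \perp b$ is the hypothesis that annihilates $C(b)\,a$. Once one recognizes that $a \perp b$ forces $C(b)\,a = 0$, the argument is immediate, with no analytic or combinatorial difficulty; the only care required is ensuring every step is a genuine consequence of the central-cover calculus rather than an assumed identity.
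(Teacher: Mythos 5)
Your proof is correct and is essentially identical to the paper's: both convert $a \vee c$ to $a+c$ via Theorem \ref{th203}, write $b = C(b)(a+c)$ using the central-cover characterization of $\leq$, and use $C(a)C(b)=0$ to annihilate the term $C(b)a$. In fact you spell out the step $C(b)a = C(b)C(a)a = 0$ more explicitly than the paper, which leaves it implicit.
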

    \begin{proof} Let $a, b, c \in R$ be such that $a \perp b$, $a \perp c$ and $ b \leq a \vee c$. Then $C(a)C(b)=C(a)C(c)=0$ and 
    $ b=C(b) (a \vee c)$. By Theorem \ref{th203}, $b=C(b)(a+c)=C(b) a + C(b) c=C(b) c$. Thus $b \leq c$.    
    \end{proof}
    \begin{theorem} \label{th204} A p.q.-Baer $*$-ring with the order $\leq$ and the orthogonality $\perp$ is a
    quasi-orthomodular poset.     
    \end{theorem}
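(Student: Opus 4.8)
The plan is to verify directly that the six defining conditions of a quasi-orthomodular poset hold for the structure $(R, \leq, \perp, 0)$, drawing on the results already established earlier in this section. First I would record that the zero element $0$ of the ring serves as the least element for $\leq$, since $0 = C(0)\,x$ holds trivially for every $x \in R$ (because $C(0)=0$). The first three conditions in the definition of an orthogonality relation --- symmetry of $\perp$, the order-compatibility property ``$x \leq y$ and $y \perp z$ imply $x \perp z$'', and $0 \perp x$ for all $x$ --- are exactly the content of the theorem asserting that $\perp$ is an orthogonality relation on a p.q.-Baer $*$-ring, so these require no further argument.

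For the three remaining conditions I would invoke the preceding lemmas one at a time. Condition (4), that $x \perp y$ forces $x \vee y$ to exist, is immediate from Theorem \ref{th203}, which in fact gives the sharper identity $x \vee y = x + y$. For condition (5), given $x \leq y$, Lemma \ref{lm204} produces an element $c \in R$ with $x \perp c$ and $y = x + c$; combining this with Theorem \ref{th203} --- which converts the sum $x + c$ into the join $x \vee c$ precisely because $x \perp c$ --- yields $y = x \vee c$ with $x \perp c$, as the condition demands. Condition (6), that $x \perp y$, $x \perp z$, and $y \leq x \vee z$ together force $y \leq z$, is precisely the statement of Lemma \ref{lm205}.

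Assembling these observations completes the verification, so I expect the write-up to be short. The argument is essentially bookkeeping: the genuine mathematical content has been front-loaded into the earlier lemmas, and the only point demanding any care is condition (5), where one must remember to pass from the additive decomposition $y = x + c$ supplied by Lemma \ref{lm204} to the join decomposition $y = x \vee c$ via the orthogonality-to-join identity of Theorem \ref{th203}. Once those prior results are in hand, no substantive obstacle remains.
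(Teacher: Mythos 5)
Your proposal is correct and follows essentially the same route as the paper, whose proof simply cites Theorem \ref{th203} and Lemmas \ref{lm204} and \ref{lm205}; your write-up just makes explicit the bookkeeping the paper leaves implicit, including the one genuinely necessary step of upgrading the additive decomposition $y = x + c$ from Lemma \ref{lm204} to the join $y = x \vee c$ via Theorem \ref{th203}.
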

   \begin{proof} The proof follows from Theorem \ref{th203} and Lemmas \ref{lm204} and \ref{lm205}.    
   \end{proof}
   \begin{theorem} In a p.q.-Baer $*$-ring $R$, the initial segments $[0,m]=\{a\in  R~|~a \leq m  \}$
    are orthomodular posets. Furthermore, if $\perp_m$ is the local orthogonality of the initial segment $[0,m]$,
    then for all $a, b\in [0, m]$, $a \perp_m b$ if and only if $a\perp b$. Moreover, if $a \perp_m b$ and
      $a, b \leq n$, then $a \perp_n b$.    
   \end{theorem}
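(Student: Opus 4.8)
The plan is to obtain this statement as an immediate application of Theorem \ref{th202} to the quasi-orthomodular poset furnished by Theorem \ref{th204}. All the essential structural work has already been done: Theorem \ref{th204} establishes that $(R, \leq, \perp)$ is a quasi-orthomodular poset, while Theorem \ref{th202} converts quasi-orthomodularity of the ambient poset into orthomodularity of each initial segment, together with exactly the two compatibility statements about the local orthogonalities $\perp_m$ that appear in the conclusion. So the proof reduces to verifying that the hypotheses of Theorem \ref{th202} are met and then quoting its conclusion with $P = R$ and $p = m$.

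First I would confirm that $(R, \leq, \perp, 0)$ is a poset with orthogonality possessing a least element, since Theorem \ref{th202} is stated for such structures. The least element is $0$: because $0$ is a central projection, Remark \ref{rm101} gives $C(0) = 0$, so $0 = C(0)b$ for every $b \in R$, which by Lemma \ref{th201} means $0 \leq b$ for all $b \in R$. Combined with Theorem \ref{th204}, this shows $(R, \leq, \perp, 0)$ is a quasi-orthomodular poset. I would also record that each initial segment $[0,m]$ is bounded, with least element $0$ and greatest element $m$, which is what permits it to carry an orthocomplementation.

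With the hypotheses in place I would apply Theorem \ref{th202} with $P = R$ and $p = m$. This yields at once that $[0,m]$ is orthomodular for an orthogonality $\perp_m$ on $([0,m], \leq)$, that $a \perp_m b$ if and only if $a \perp b$ for all $a, b \in [0,m]$, and that $a \perp_m b$ together with $a, b \leq n$ forces $a \perp_n b$. These are precisely the three assertions of the theorem. I do not anticipate any genuine obstacle, since the nontrivial ring-theoretic content, namely the existence of joins and meets of orthogonal elements (Theorem \ref{th203}), the decomposition $b = a + c$ with $a \perp c$ whenever $a \leq b$ (Lemma \ref{lm204}), and the cancellation property (Lemma \ref{lm205}), was already absorbed into the verification of quasi-orthomodularity in Theorem \ref{th204}. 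The only point requiring a moment's care is to note that the local orthogonality $\perp_m$ supplied abstractly by \cite{Cir} coincides with the one intended in the statement, and this is guaranteed precisely by the ``if and only if'' clause of Theorem \ref{th202}.
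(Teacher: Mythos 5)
Your proposal is correct and follows exactly the paper's own route: the paper proves this theorem by citing Theorem \ref{th202} applied to the quasi-orthomodular poset established in Theorem \ref{th204}, which is precisely your argument. Your additional check that $0$ is the least element (via $C(0)=0$ and Lemma \ref{th201}) is a reasonable bit of extra care that the paper leaves implicit.
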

    \begin{proof} The proof follows from Theorems \ref{th202} and \ref{th204}.     
    \end{proof}
      
 \noindent {\bf Acknowledgment:} The first author gratefully
 acknowledges the University Grant Commission, New Delhi, India
 for the award of Teachers Fellowship under the faculty development
 program, during the $XII^{th}$ plan period (2012-1017).

$$\diamondsuit\diamondsuit\diamondsuit$$

\end{document}